\newcommand{\Z}{{\mathbf Z}}
\newcommand{\Q}{{\mathbf Q}}
\newcommand{\kk}{{\mathbf k}}
\newcommand{\cat}{{\sf {cat }}}
\newcommand{\id}{{\rm {id }}}
\newcommand{\tc}{{\rm\sf {TC}}}
\newcommand{\supp}{\rm {supp}}
\newcommand{\comment}[1]{}
\newcommand{\card}{\rm {cp}}
\def\U{\mathcal U}
\def\V{\mathcal V}
\def\zcl{\rm {zcl}}
\newtheorem{theorem}{Theorem}[section]
\newtheorem{lemma}[theorem]{Lemma}
\newtheorem{ex}[theorem]{Example}
\newtheorem{corollary}[theorem]{Corollary}
\newtheorem{definition}[theorem]{Definition}
\begin{document}


%

\title{CONFIGURATION SPACES AND ROBOT MOTION PLANNING ALGORITHMS}\footnote{To appear in the volume \lq\lq Combinatorial and Toric Topology\rq\rq, Lecture Note Series, Institute for Mathematical Sciences, National University of Singapore (\copyright © World Scientific Publishing Co., 2017 ) }
\maketitle

\markboth{M.Farber}{Motion planning algorithms}

\begin{center}{\author{Michael Farber}

\address{School of Mathematical Sciences\\
Queen Mary, University of London\\
Mile End Road, London E1 4NS, UK \\
m.farber@qmul.ac.uk}}\end{center}

\begin{abstract}
The paper surveys topological problems relevant to the motion planning problem of robotics and includes some new results and constructions. 
First we analyse the notion of topological complexity of configuration spaces which is responsible for discontinuities in algorithms for robot navigation. 
Then we present explicit motion planning algorithms for coordinated collision free control of many particles moving in Euclidean spaces or on graphs. These algorithms are optimal in the sense that they have minimal number of regions of continuity. Moreover, we describe in full detail the topology of configuration spaces of two particles on a tree and use it to construct some
top-dimensional cohomology classes in configuration spaces of $n$ particles on a tree. 
\end{abstract}

\vspace*{12pt}
%

\section{Introduction}

This paper starts  with a survey of the topological approach to the motion planning problem complementing \cite{F4} and chapter 4 of \cite{Finv}. 
 In \S \ref{sec2} -- \S \ref{sec7} we present a general description of the method and some basic results. 

In \S \ref{sec8} and \S \ref{sec9}  we analyse in full detail motion planning algorithms for collision free motion 
of many particles moving in the Euclidean spaces ${\Bbb R}^d$. Problems of this kind appear in many areas of engineering when multiple objects 
have to be moved in a coordinated way from one state to another avoiding collisions. The motion planning algorithms presented here are {\it optimal} in the sense that they have minimal topological complexity (equal $2n-1$ or $2n-2$ depending on the parity of the dimension $d$ where $n$ is the number of moving objects). 
The motion planning algorithms suggested in \cite{F4} had topological complexity quadratic in $n$. 
A recent paper \cite{HME} proposed a motion planning algorithm for $n$ particles moving on the plane ${\Bbb R}^2$ having complexity $2n-1$. 
The algorithms presented here are inspired by the construction of \cite{HME}.

In \S \ref{sec10} we analyse the topology of configuration spaces of graphs and present (following \cite{F3}) a motion planning algorithm for collision free control of $n$ particles on a tree. In \S \ref{secfg2} we describe explicitly the configuration space $F(\Gamma, 2)$ of two particles on a tree proving the main Theorem \ref{thm1} in full detail; this theorem was stated in \cite{F3} without proof. 

Theorem \ref{tctree} claims that the topological complexity of collision free motion of many particles on a tree is independent of the number of moving particles and depends only on the number of essential vertices of $\Gamma$. 
This fact contrasts the corresponding result for Euclidean spaces where the complexity is linear in $n$. The proof of Theorem \ref{tctree}
is completed in \S \ref{sec13} after an analysis of top-dimensional cohomology of configuration spaces of trees which is carried out in \S \ref{sec12}. 
Theorem \ref{tctree} was stated in \cite{F3} without proof. Recently S. Scheirer \cite{SS} published a detailed proof of a similar result under some additional assumptions. 

In \S \ref{sec14} we make some further comments and most recent literature references.

%
%
%
%
%
%

\section{Motion planning algorithms and topological complexity of configuration spaces}\label{sec2}

\subsection{Motion planning algorithms} Any mechanical system $S$ possesses a variety of states $C(S)$, called {\it the configuration space}. 
As an example, one may imagine a robot whose state consists of its location in the 3-space as well as the mutual positions of all its body parts such as elbows, knees, fingers etc. 

We want to programme our system $S$ so that it is capable of moving {\it autonomously} from any initial state $A\in C(S)$ to any final state $B\in C(S)$. Such programme is {\it a motion planning algorithm}. 
Once a motion planning algorithm has been specified, we may simply {\it order} our system
to move to a new state $B$, and the motion planning algorithm will prescribe {\it how} the system will implement the motion departing from the current state $A$. 

A state of the system is typically described by a collection of numerical parameters which can be interpreted as coordinates of a point in ${\Bbb R}^N$. The variety of all states of the system is then represented by a subset 
 $C(S)\subset {\Bbb R}^N$; we see that the configuration space of the system comes naturally with a topology. 
 The topology of 
the configuration space $C(S)$ is important since motions of the system are represented by {\it continuous} paths in $C(S)$. 

We refer to  \cite{L} and \cite{Sh} for additional information about motion planning in robotics.

\subsection{The concept of $\tc(X)$} We shall study a topological invariant $\tc (X)$ of a topological space $X$, originally introduced in \cite{F1}, see also \cite{F2} and \cite{F4}. It is a numerical homotopy invariant
inspired by the robot motion problem, similar in spirit to the classical Lusternik -- Schnirelmann category $\cat(X)$.
Intuitively, $\tc(X)$ is a measure of the navigational complexity of $X$ viewed as the configuration space of a system. 
$\tc(X)$, as well as $\cat(X)$, are special cases of a more general notion of the genus of a fibration introduced by A. Schwarz \cite{Sz}.



Next we give the formal definitions. Let $X$ denote a topological space though of as the configuration space of a mechanical system. The states of the system are represented by the points of $X$, and continuous
motions of the system are represented by continuous paths $\gamma: [0,1]\to X$.
Here the point $A=\gamma(0)$ represents the initial state and $\gamma(1)=B$
represents the final state of the system. The space $X$ is path connected if and only if
the system can be brought to an arbitrary state from any given state by a continuous motion. 

Denote by $PX=X^I$ the space of all continuous paths $\gamma: I= [0,1]\to X.$ The
space $PX$ is supplied with the compact--open topology, see  \cite{Sp}, which is characterised by the property that a map $Z\to X^I$ is continuous if and only if the associated map $Z\times I\to X$ is continuous. Let
\begin{eqnarray}\label{fibration}
\pi: PX\to X\times X
\end{eqnarray}
be the map which assigns to a path $\gamma$ the pair $(\gamma(0), \gamma(1))\in
X\times X$ of the initial -- final configurations. It is easy to see that $\pi$ is a fibration in the
sense of Serre, see \cite{Sp}, chapter 2, \S 8, Corollary 3.

{\it A motion planning algorithm} is a section of the fibration
$\pi$. 
 In other words a motion planning algorithm is a map (not necessarily continuous)
$$ s: X\times
X\to PX$$ satisfying
$
\pi\circ s= 1_{X\times X}.$

A motion planning algorithm $s: X\times X\to PX$ is {\it continuous} if the suggested route $s(A,B)$ of going from $A$ to $B$ depends continuously on
the states $A$ and $B$.
A continuous motion planning algorithm in $X$ exists if and only if the space
$X$ is contractible, see \cite{Finv}, Lemma 4.2.
Thus, for a system with non-contractible configuration space any motion
planning algorithm must be discontinuous.

\begin{definition}\label{def1}
Given a path-connected topological space $X$, we define the topological complexity of $X$ as the minimal number 
$\tc(X)=k$ such that the Cartesian product $X \times X$ may be covered by $k$ open subsets
$X \times X = U_1 \cup U_2 \cup \dots U_k$ such that for any $i = 1,2,...,k$ there exists a continuous section
$s_i: U_i \to PX,$
$\pi\circ s_i=\id$ over $U_i$. If no such $k$ exists we will set $\tc(X)=\infty$.
\end{definition}

\begin{ex} \label{ex22} {\rm Suppose we are to construct a motion planning algorithm on the circle $X=S^1$. Given two points 
$A, B\in S^1$, which are not antipodal, i.e. $B\not= -A$, we may move from $A$ to $B$ along the shortest geodesic curve $s_1(A, B)$ 
which is unique and depends continuously on $A$ and $B$. 
This defines a continuous section $s_1:U_1\to (S^1)^I$, where $U_1\subset S^1\times S^1$ denotes the set 
$U_1=\{(A, B)\in S^1\times S^1; A\not=-B\}$.

However, if the points $A$ and $B$ are antipodal then there are two distinct shortest geodesic curves from $A$ to $B$ so that the section $s_1$ does not extend to a continuous section over the whole product $S^1\times S^1$. 

Denote $U_2=\{(A, B)\in S^1\times S^1; A\not=B\}\subset S^1\times S^1$. We may define a continuous section $s_2:U_2\to (S^1)^I$ by setting 
$s_2(A, B)$ to be the path moving from $A$ to $B$ in the clockwise direction along the circle with constant velocity. Again, we observe that the section $s_2$ cannot be 
extended to a continuous section on the whole space $S^1\times S^1$. 

The open sets $U_1, U_2$ cover $S^1\times S^1$ and therefore $\tc(S^1)\le 2$ according to Definition \ref{def1}. 
On the other hand, since the circle $S^1$ is not contractible we know that $\tc(S^1)>1$. Therefore $\tc(S^1)=2.$} 
\end{ex}

\subsection{Homotopy invariance} Next we show that the topological complexity $\tc(X)$ depends only on the homotopy type of $X$. 
We start with the following auxiliary statement. 

\begin{theorem}\label{domination} Let $X$ and $Y$ be topological spaces. Suppose that $X$ dominates $Y$ , i.e., there exist continuous maps $f : X \to Y$ and $g: Y \to X$ such that $f \circ g \sim \id_Y$. Then $\tc(Y) \le \tc(X).$
\end{theorem}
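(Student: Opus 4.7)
The plan is to pull back the open cover witnessing $\tc(X)$ along $g\times g\colon Y\times Y\to X\times X$, transport the local sections from $PX$ to $PY$ via $f$, and then correct the endpoints using the homotopy $H\colon f\circ g\sim \id_Y$.

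First I would fix an optimal cover $X\times X=U_1\cup\cdots\cup U_k$ with $k=\tc(X)$, together with continuous sections $s_i\colon U_i\to PX$ of the endpoint fibration $\pi_X\colon PX\to X\times X$. Define $V_i=(g\times g)^{-1}(U_i)\subset Y\times Y$; since $g\times g$ is continuous, each $V_i$ is open, and since the $U_i$ cover $X\times X$, the $V_i$ cover $Y\times Y$. So we already have $k$ open sets, and it remains to build continuous sections $\sigma_i\colon V_i\to PY$ of $\pi_Y$.

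For each $V_i$, given $(A,B)\in V_i$, the path $s_i(g(A),g(B))\in PX$ runs from $g(A)$ to $g(B)$, so composing pointwise with $f$ yields a path $f\circ s_i(g(A),g(B))\in PY$ from $fg(A)$ to $fg(B)$. This path has the wrong endpoints, but the assumed homotopy $H\colon Y\times I\to Y$ with $H(y,0)=y$ and $H(y,1)=fg(y)$ provides, continuously in $y$, a correcting path $\alpha_y(t)=H(y,t)$ from $y$ to $fg(y)$. I would then define $\sigma_i(A,B)$ to be the concatenation
\[
\alpha_A \;\ast\; \bigl(f\circ s_i(g(A),g(B))\bigr) \;\ast\; \overline{\alpha_B},
\]
where $\overline{\alpha_B}$ is the reverse of $\alpha_B$. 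By construction $\sigma_i(A,B)$ is a path in $Y$ from $A$ to $B$, so $\pi_Y\circ \sigma_i=\id_{V_i}$.

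The only substantive verification is continuity of $\sigma_i$. The three pieces depend continuously on $(A,B)$ as maps into $PY$: the first and third come from the continuous map $(A,B,t)\mapsto H(A,t)$, $(A,B,t)\mapsto H(B,1-t)$ out of $V_i\times I$, and the middle one is the composition $V_i\xrightarrow{g\times g} U_i\xrightarrow{s_i} PX\xrightarrow{f_\ast} PY$, where $f_\ast$ denotes post-composition with $f$ and is continuous in the compact-open topology. Since concatenation of paths with matching endpoints is a continuous operation $PY\times_Y PY\to PY$, the map $\sigma_i$ is continuous on $V_i$. The main thing to be careful about, and the step I expect to require the most attention, is exactly this compatibility under the compact-open topology on $PY$, since one must check that $f_\ast$ and the concatenation map really are continuous; once that is in hand, the sections $\{\sigma_i\}_{i=1}^k$ exhibit $\tc(Y)\le k=\tc(X)$.
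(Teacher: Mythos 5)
Your proposal is correct and follows essentially the same route as the paper: pull back the cover along $g\times g$, push the sections forward with $f$, and splice in the correcting paths supplied by the homotopy $f\circ g\sim \id_Y$ (the paper writes the same concatenation explicitly with the $[0,1/3]$, $[1/3,2/3]$, $[2/3,1]$ reparametrisation). Your extra remarks on continuity of $f_\ast$ and of concatenation in the compact-open topology are fine and standard.
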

\begin{proof} Assume that $U \subset X \times X$ is an open subset such that there exists a continuous section 
$s: U \to PX$ of (\ref{fibration}) over $U$. Define $V = (g \times g)^{-1}(U) \subset Y \times Y.$ 
We may construct a continuous section $\sigma: V \to PY$ over $V$ as follows. 
Fix a homotopy $h_t: Y \to Y$ with $h_0=\id_Y$ and $h_1= f\circ g$; here $t\in [0,1]$. For $(A,B)\in V $ and $t\in [0,1]$ set
$$
\sigma(A, B) (\tau) = \left\{
\begin{array}{ll} 
h_{3\tau}(A), & \mbox{for} \quad 0\le \tau\le 1/3,\\ \\
f(s(gA,gB)(3\tau-1)), & \mbox{for} \quad 1/3\le \tau\le 2/3,\\ \\
h_{3(1-\tau)}(B), & \mbox{for}  \quad 2/3 \le \tau\le 1. 
\end{array}
\right.
$$
Thus we obtain that for $k = \tc(X)$, any open cover $U_1 \cup \dots \cup U_k = X \times X$ with a continuous section $s_i:U_i\to PX$ over each $U_i$ 
defines an open cover $V_1\cup \dots \cup V_k$ of $Y \times  Y$, where each $V_i= (g\times g)^{-1}(U_i)$ admits a continuous section 
$\sigma_i: V_i\to PY$. 
This proves that $\tc(Y)\le \tc(X)$. 
\end{proof}

\begin{corollary}
If $X$ and $Y$ are homotopy equivalent then $\tc(X)=\tc(Y)$. 
\end{corollary}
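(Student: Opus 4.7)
The plan is to derive this as an immediate corollary of Theorem \ref{domination}, using the definition of homotopy equivalence. Since $X$ and $Y$ are homotopy equivalent, by definition there exist continuous maps $f: X \to Y$ and $g: Y \to X$ such that $f \circ g \sim \id_Y$ and $g \circ f \sim \id_X$.

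First I would observe that the relation $f \circ g \sim \id_Y$ is exactly the hypothesis needed to say that $X$ dominates $Y$ in the sense of Theorem \ref{domination}. Applying that theorem directly yields the inequality $\tc(Y) \le \tc(X)$.

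Next, I would swap the roles of the two spaces: the relation $g \circ f \sim \id_X$ says equivalently that $Y$ dominates $X$ (with the roles of $f$ and $g$ interchanged). A second application of Theorem \ref{domination} then gives $\tc(X) \le \tc(Y)$. Combining the two inequalities yields $\tc(X) = \tc(Y)$, as required.

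There is no real obstacle here; the only thing worth noting is that the notion of domination used in Theorem \ref{domination} is asymmetric (it only requires one homotopy $f \circ g \sim \id_Y$), so one must be careful to apply the theorem twice, once in each direction, to exploit the full strength of homotopy equivalence. The proof itself is essentially a single sentence once the symmetry of the situation is recognised.
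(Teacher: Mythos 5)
Your proof is correct and is exactly the intended argument: the paper states this corollary immediately after Theorem \ref{domination} without proof, precisely because it follows by applying that theorem twice, once in each direction, as you do. Your remark about the asymmetry of domination is apt but does not change the substance.
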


\section{Upper and lower bounds for $\tc(X)$}

\subsection{The upper bound} We start with a dimensional upper bound. 
\begin{theorem}\label{cor1} For any path-connected paracompact locally contractible topological space $X$ one has
\begin{eqnarray}
\tc(X)\, \leq \,  \dim(X\times X) +1.\label{ineq1}
\end{eqnarray}
\end{theorem}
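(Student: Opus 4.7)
The plan is to realise $\tc(X)$ as the Schwarz genus of the path fibration $\pi: PX \to X \times X$ of (\ref{fibration}) and to bound this genus by a dimension-plus-one argument, in the spirit of Schwarz \cite{Sz}. Set $d = \dim(X \times X)$. Since $X$ is paracompact and locally contractible, so is $X \times X$, and in the proof I would exploit both the local contractibility (to get local sections) and the finite covering dimension (to control how many sets are needed).

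First I would build a suitable open cover of $X \times X$. Using local contractibility, cover $X \times X$ by contractible open sets $\{W_\alpha\}$. By a standard refinement theorem for paracompact spaces of covering dimension $d$ (Ostrand-type), this cover admits a refinement that can be partitioned into $d+1$ subfamilies $\mathcal{W}_1, \dots, \mathcal{W}_{d+1}$ such that, within each $\mathcal{W}_i$, the members are pairwise disjoint; each member remains contractible because it is open in a contractible $W_\alpha$ (and one may further refine so this holds). Putting $U_i = \bigcup_{W \in \mathcal{W}_i} W$, we obtain an open cover $U_1 \cup \dots \cup U_{d+1} = X \times X$ in which each $U_i$ is a disjoint union of contractible opens.

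Second I would construct a continuous section of $\pi$ over each $W \in \mathcal{W}_i$. Pick a base point $w_0 \in W$ and a contraction $h: W \times I \to W$ with $h_0 = \id_W$, $h_1 \equiv w_0$. Choose any path $\gamma_0 \in PX$ with $\pi(\gamma_0) = w_0$. Consider the homotopy
\[
H: W \times I \to X \times X, \qquad H(w,t) = h_{1-t}(w),
\]
and the initial lift $\tilde H_0(w) \equiv \gamma_0 \in PX$, which satisfies $\pi \circ \tilde H_0 = H(\cdot, 0)$. Since $\pi$ is a Serre fibration, the homotopy lifting property yields a lift $\tilde H: W \times I \to PX$ of $H$; then $s_W(w) := \tilde H(w,1)$ is a continuous section of $\pi$ over $W$. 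Since the members of $\mathcal{W}_i$ are pairwise disjoint, the sections $s_W$ assemble into a continuous section $s_i: U_i \to PX$. The existence of $d+1$ such sections gives $\tc(X) \le d+1 = \dim(X \times X) + 1$ via Definition \ref{def1}.

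The step I expect to require the most care is the combinatorial/dimensional refinement producing the partition into $d+1$ disjoint subfamilies of contractible opens; this is the only place where the dimension hypothesis enters, and its proof for general paracompact locally contractible spaces (rather than CW complexes or manifolds) requires invoking the appropriate refinement theorem for covering dimension. Once that refinement is in hand, the local section argument via homotopy lifting is entirely mechanical.
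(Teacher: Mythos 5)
Your overall strategy is sound and yields the theorem, but it takes a genuinely different route from the paper in the key combinatorial step. The paper does not attempt to produce a refinement that splits into $d+1$ disjoint subfamilies; instead it takes any refinement $\{V_j\}$ of multiplicity $\le d+1$ (which is exactly what the definition of covering dimension supplies), chooses a subordinate partition of unity $\{h_j\}$, and for each finite $S\subset J$ forms the open set $W(S)$ of points where $h_j>h_k$ for all $j\in S$, $k\notin S$. Incomparable sets $S,S'$ give disjoint $W(S), W(S')$, each $W(S)$ sits inside some $V_j$ and hence admits a section, and $W(S)=\emptyset$ for $|S|>d+1$; grouping by $|S|$ gives the $d+1$ open sets. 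Your route replaces this partition-of-unity trick by Ostrand's colouring theorem for paracompact spaces of covering dimension $\le d$. That theorem is true and your reduction is valid, but it outsources the combinatorics to a substantially stronger covering result, whereas the paper's argument is self-contained modulo the existence of partitions of unity. Your explicit construction of local sections via the homotopy lifting property is also more detailed than the paper's (which simply asserts that local contractibility yields a cover by sets admitting sections); note only that to lift over $W\times I$ for an arbitrary open $W$ you should use that $\pi$ in (\ref{fibration}) is in fact a Hurewicz fibration, since the Serre property alone gives lifting only against CW pairs.

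One step of yours is wrong as stated and needs repair: an open subset of a contractible open set need \emph{not} be contractible, so the members of the refined subfamilies $\mathcal{W}_i$ cannot be assumed contractible, and ``one may further refine so this holds'' is not available (local contractibility, as defined in the paper, only gives neighbourhoods whose \emph{inclusion into} $X\times X$ is null-homotopic, not neighbourhoods that are contractible in themselves). Fortunately your lifting argument does not need intrinsic contractibility: it suffices that the inclusion $W\hookrightarrow X\times X$ be null-homotopic, a property that \emph{is} inherited by arbitrary subsets of the original cover. So replace your contraction $h:W\times I\to W$ by a null-homotopy $H:W\times I\to X\times X$ of the inclusion, lift it starting from the constant map to a path $\gamma_0$ over the terminal point, and take $s_W=\tilde H(\cdot,1)$ exactly as you propose. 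With that correction the proof is complete.
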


Here $\dim(X\times X)$ denotes the covering dimension of $X\times X$. 

Recall that $\dim (Y)\leq n$ if any 
open cover of $Y$ has a locally finite open refinement such that no point of $Y$ belongs 
to more than $n+1$ open sets of the refinement. If $Y$ is a polyhedron, then $\dim(Y)$ coincides with the 
maximum of the dimensions of the simplices of $Y$.

A topological space $Y$ is called {\it locally contractible} if any point of $Y$ has an open neighbourhood $U\subset Y$ 
such that the inclusion $U\to Y$ is null-homotopic. 

\begin{proof} Denote $\dim (X\times X)=n$. Let $\U=\{U_i\}_{i\in I}$ be an open cover of $X\times X$ such that each open set $U_i\subset X\times X$ admits a continuous section 
$s_i:U_i\to PX$, where $i\in I$. Such cover exists since $X$ is locally contractible. Let $\V=\{V_j\}_{j\in J}$ be a refinement of $\U$ having multiplicity $\le n+1$, i.e. for every $(x, y)\in X\times X$ there exist at most $n+1$ values of $j\in J$ such that $(x, y)\in V_j$. 
Construct a partition of unity $\{h_j\}_{j\in J}$ subordinate to $\V$, i.e. each $h_j:X\times X\to [0,1]$ is continuous, ${\rm {supp}}(h_j) \subset V_j$ and $\sum_{j\in J}h_j = 1_{X\times X}$. Given a subset 
$S\subset J$ define $$W(S)\subset X\times X$$ as the set of all pairs $(x, y)$ such that $h_j(x, y)>h_k(x, y)$ for all $j, k\in J$ satisfying $j\in S$ and 
$k\notin S$. Each set $W(S)$ is open and admits a continuous section $W(S)\to PX$. 
Besides,  
$W(S)=\emptyset$ for $|S|>n+1$ and 
the family $\{W(S); |S|\le n+1\}$ is an open cover  of $X\times X$. 
If $S, S'\subset J$ are two subsets such that none of them contains the other, i.e. there is $j\in S$, $j\notin S'$ and there exists $k\in S'$, $k\notin S$, then the intersection $W(S)\cap W(S')=\emptyset$ is empty. Therefore the union 
$$W_k=\bigcup_{|S|=k}W(S),$$
is open and admits a continuous section $W_k\to PX$, where $k=1, 2, \dots, n+1$.
We obtain an open cover $\{W_1, \dots, W_{n+1}\}$ of $X\times X$ with the desired properties implying that $\tc(X) \le n+1$. 
\end{proof}

\subsection{The lower bound} Next we give a lower bound for $\tc(X)$ which depends on the structure of the cohomology algebra of $X$.  

Let $\kk$ be a field. The singular cohomology $H^\ast(X;\kk)$ is a graded $\kk$-algebra with the multiplication
\begin{eqnarray}
\cup: H^\ast(X;\kk)\otimes H^\ast(X;\kk)\to H^\ast(X;\kk)\label{prod}
\end{eqnarray}
given by the cup-product, see \cite{Hat02}. For two cohomology classes $u \in H^i(X;\kk)$ and $v \in H^j(X;\kk)$ 
we shall denote their cup-product by $$u\cup v= uv\, \in \, H^{i+j}(X;\kk).$$
The tensor product $H^\ast(X;\kk)\otimes H^\ast(X;\kk)$ is also a graded $\kk$-algebra
with the multiplication 
\begin{eqnarray}\label{signs}
(u_1\otimes v_1)\cdot (u_2\otimes v_2) = (-1)^{|v_1|\cdot |u_2|}\, u_1u_2\otimes v_1v_2.
\end{eqnarray}
Here $|v_1|$ and $|u_2|$ denote the degrees of cohomology classes $v_1$ and $u_2$ correspondingly.
The cup-product (\ref{prod}) is an algebra homomorphism.

\begin{definition} The kernel of homomorphism (\ref{prod}) is called {\it the ideal of the zero-divisors} of $H^\ast(X;\kk)$.
The {\it zero-divisors-cup-length} of $H^\ast(X;\kk)$ is the length of the longest nontrivial product  under the multiplication (\ref{signs}) 
in the ideal of the zero-divisors
of $H^\ast(X;\kk)$.
\end{definition}

\begin{theorem}\label{thm4} The topological complexity of motion planning $\tc(X)$ 
is greater than the zero-divisors-cup-length of 
$H^\ast(X;\kk)$.
\end{theorem}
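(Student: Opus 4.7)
I plan to follow the standard cohomological lower bound for the Schwarz genus of the path-space fibration $\pi:PX\to X\times X$ from (\ref{fibration}). Suppose $\tc(X)=k$, so that $X\times X=U_1\cup\cdots\cup U_k$ admits continuous sections $s_i:U_i\to PX$ of $\pi$. The aim is to prove that any product of $k$ zero-divisors must vanish. Since $\kk$ is a field, the K\"unneth cross product identifies $H^\ast(X;\kk)\otimes H^\ast(X;\kk)$ with $H^\ast(X\times X;\kk)$ as graded $\kk$-algebras (the sign rule in (\ref{signs}) being precisely the cross-product sign), and under this isomorphism the ideal of zero-divisors corresponds to the kernel of the diagonal pullback $\Delta^\ast:H^\ast(X\times X;\kk)\to H^\ast(X;\kk)$, where $\Delta:X\to X\times X$ sends $x\mapsto(x,x)$.

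The key geometric step is to show that each $U_i$ can be deformed inside $X\times X$ onto the diagonal. I would use the formula
$$H_i\bigl((A,B),t\bigr)=\bigl(A,\,s_i(A,B)(t)\bigr),\qquad (A,B)\in U_i,\ t\in[0,1],$$
which, because $s_i$ is a section of $\pi$, defines a continuous homotopy between the map $(A,B)\mapsto(A,A)$ (which factors through $\Delta$) at $t=0$ and the inclusion $j_i:U_i\hookrightarrow X\times X$ at $t=1$. Consequently, for any zero-divisor $w\in H^\ast(X\times X;\kk)$ one has $j_i^\ast(w)=\Delta^\ast(w)\circ(\text{proj})=0$ in $H^\ast(U_i;\kk)$, and the long exact sequence of the pair $(X\times X,U_i)$ furnishes a relative lift $\tilde w_i\in H^\ast(X\times X,U_i;\kk)$ mapping to $w$.

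Given any zero-divisors $w_1,\dots,w_k$ with such lifts $\tilde w_1,\dots,\tilde w_k$, I would invoke the multiplicativity of the relative cup product across unions: the product $\tilde w_1\cup\cdots\cup\tilde w_k$ lies in $H^\ast(X\times X,\,U_1\cup\cdots\cup U_k;\,\kk)=H^\ast(X\times X,X\times X;\kk)=0$, while its image under restriction to absolute cohomology is $w_1\cdots w_k$. Hence the absolute product vanishes, so the zero-divisors-cup-length is at most $k-1$, proving $\tc(X)$ strictly exceeds it. The main delicate point will be bookkeeping: one must check that the sign convention (\ref{signs}) matches the graded commutativity signs appearing when one multiplies relative classes on $X\times X$ via the K\"unneth identification, and that the connecting homomorphism in the pair sequence interacts correctly with products across $U_i$. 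Both are formal but must be handled with care.
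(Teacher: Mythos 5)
Your proposal is correct and follows essentially the same route as the paper: identify zero-divisors with classes restricting to zero on the diagonal, use the section $s_i$ to kill each zero-divisor on $U_i$ (the paper phrases this as $\sigma_i^\ast\circ\pi^\ast(u_i)=0$ via the homotopy equivalence $PX\simeq X$, which is the same content as your explicit homotopy $H_i$ onto the diagonal), lift to relative classes via the long exact sequence of the pair, and observe that the product of the lifts lies in $H^\ast(X\times X,\,X\times X;\kk)=0$. No substantive difference.
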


\noindent
\begin{proof} Let $\Delta_X\subset X\times X$ denote the diagonal. 
First we observe that the kernel of the induced homomorphism 
$\pi^\ast: H^j(X\times X; \kk)\to H^j(PX;\kk)$ coincides with the set of cohomology classes 
$u\in H^j(X\times X;\kk)$ such that $$u|_{\Delta_X}=0\in H^j(X;\kk).$$ Let $\alpha: X\to PX$ be the map which associates to any point $x\in X$ the constant path $[0,1]\to X$ at this point. Note that $\alpha$ is a homotopy equivalence and the composition $\pi\circ \alpha: X \to X\times X$ is the inclusion onto the diagonal $\Delta_X$ and thus our statement follows. 

%
Next we note that the composition
\begin{eqnarray*}
H^\ast(X;\kk)\otimes H^\ast(X;\kk) \simeq H^\ast(X\times X;\kk)\stackrel {\pi^\ast}\to H^\ast(PX;\kk)
\underset{\simeq}{\stackrel {\alpha^\ast}\to} H^\ast(X;\kk)
\end{eqnarray*} 
coincides with
the cup-product homomorphism (\ref{prod}) where the homomorphism on the left is the K\"unneth isomorphism. 

Combining these two remarks we obtain that a cohomology class $$u=\sum_r \, a_r\times b_r\in H^\ast(X\times X;\kk)$$ satisfies $\pi^\ast u =0$ if and only if the tensor 
$$\sum_r a_r\otimes b_r\in H^\ast(X;\kk)\otimes H^\ast(X;\kk)$$ is a zero-divisor. 

Suppose that $u_1, \dots, u_s\in H^\ast(X\times X;\kk)$ are cohomology classes satisfying $\pi^\ast(u_j)=0$ for $j=1, \dots, s$ and such that their cup-product 
$$0\not= u_1\cup \dots \cup u_s\in H^\ast(X\times X;\kk)$$ is nonzero. We claim that the topological complexity $\tc(X)$ must satisfy $\tc(X)\ge s+1$. Indeed, suppose that $\tc(X)\le s$, i.e. one may find an open cover $U_1, \dots, U_s$ of $X\times X$ with each open set $U_i$ admitting a continuous section
$\sigma_i: U_i\to PX$. We have 
$$u_i|_{U_i}= \sigma_i^\ast\circ \pi^\ast(u_i) =0$$
and from the exact long cohomology sequence of the pair $(X\times X, U_i)$ one obtains that there exists a relative cohomology class 
$\tilde u_i\in H^\ast(X\times X, U_i;\kk)$ such that 
\begin{eqnarray}
u_i = \tilde u_i|_{\Delta_X}, \quad i=1, \dots, s.
\end{eqnarray}
Thus we see that the nontrivial product $u_1u_2\dots u_s$ equals $(\tilde u_1\tilde u_2\dots \tilde u_s)|_{\Delta_X}$; however the product 
$\tilde u_1\tilde u_2\dots \tilde u_s$ lies in the trivial group $H^\ast(X\times X, \cup_i U_i;\kk)= H^\ast(X\times X, X\times X;\kk)=0$ contradicting
our assumption $u_1u_2\dots u_s\not=0$. 
%
\end{proof}

\begin{ex}\label{ex5}{\rm  Let $X=S^n$. Let $u\in H^n(S^n;\kk)$
be the fundamental class, and let $1\in H^0(S^n;\kk)$ be the unit.
Then the class $a=1\otimes u-u\otimes 1\in H^\ast(S^n;\kk)\otimes H^\ast(S^n;\kk)$ is a zero-divisor, since
applying the homomorphism (\ref{prod}) to it we obtain 
$ 1\cdot u-u\cdot 1=0.$ 
Another zero-divisor is $b=u\otimes u$, since $u^2=0$. 
Computing $a^2=a\cdot a$ by means of rule (\ref{signs}) we find
$$a^2=((-1)^{n-1}-1)\cdot u\otimes u.$$ 
Hence $a^2=-2 b$ for $n$ even and $a^2=0$ for $n$ odd;
the product $ab$ vanishes for any $n$.
We conclude that {\it the zero-divisors-cup-length of $H^\ast(S^n;\Q)$
is greater or equal than 1 for $n$ odd and is greater or equal than 2 for $n$ even.} }\end{ex}

Applying Theorem \ref{thm4}
we find that $\tc(S^n)>1$ for $n$ odd and $\tc(S^n)>2$ for $n$ even.
This means that any motion planner on the sphere $S^n$ must have at least two open sets $U_i$; 
moreover, if $n$ is even, any motion planner on the sphere $S^n$ must have at least 
three open sets $U_i$.

\begin{ex} \label{exgraph} Let $X$ be a connected finite graph with $b_1(X)>1$. Then there exist two linearly independent cohomology classes 
$u_1, u_2\in H^1(X;\Q)$. Then for $i=1, 2$ the tensors $1\otimes u_i-u_i\otimes 1$ are zero-divisors and their product equals 
$u_2\otimes u_1-u_1\otimes u_2\not=0$. Hence by Theorem \ref{thm4} we have $\tc(X)\ge 3$. On the other hand, applying Theorem \ref{cor1} we obtain 
$\tc(X)\le 3$. Therefore, $\tc(X)=3$.

\end{ex}

\section{Simultaneous control of several objects}

Suppose that we have a system which is  a union of two independent systems $S_1$ and $S_2$ such that $S_1$ and $S_2$ can move independently without interaction. For example one may imagine the situation that an operator has to control two robots confined to two different rooms in the house
simultaneously. If $X_i$ denotes the configuration space of the system $S_i$, where $i=1, 2$, then the configuration space of our entire system is the Cartesian product $X_1\times X_2$, the variety of all pairs of states $(x_1, x_2)$ where $x_1\in X_1$ and $x_2\in X_2$. 

Note that in the case of two robots operating in the same room we would have to exclude from the product $X_1\times X_2$ the set of all pairs of configurations $(x_1, x_2)$ where the robots collide; thus, in this case the actual configuration space will be a suitable subspace of the product $X_1\times X_2$.

\subsection{The product inequality}
\begin{theorem}\label{prod11} For path--connected metric spaces
$X$ and $Y$ one has
\begin{eqnarray}
\tc(X\times Y)\leq \tc(X) +\tc(Y) -1.
\end{eqnarray}
\end{theorem}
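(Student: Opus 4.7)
Let $k = \tc(X)$ and $\ell = \tc(Y)$. Choose open covers $\{U_i\}_{i=1}^k$ of $X\times X$ and $\{V_j\}_{j=1}^\ell$ of $Y\times Y$ with continuous sections $s_i\colon U_i \to PX$ and $t_j\colon V_j \to PY$ of the path fibrations $\pi_X$ and $\pi_Y$. Identify $(X\times Y)\times(X\times Y)$ with $(X\times X)\times(Y\times Y)$ in the obvious way; correspondingly, $P(X\times Y)\cong PX\times PY$ and the path fibration $\pi_{X\times Y}$ becomes $\pi_X\times\pi_Y$. Each product $U_i\times V_j$ is then open in $(X\times Y)^2$ and admits the continuous section $s_i\times t_j$. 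The product cover $\{U_i\times V_j\}$ has $k\ell$ pieces, and the plan is to repackage it into only $k+\ell-1$ pieces.

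The index set decomposes into the $k+\ell-1$ anti-diagonals $\Delta_m:=\{(i,j):i+j=m\}$, $m=2,\dots,k+\ell$. For each $m$ I aim to construct an open set $W_m$ that is a \emph{disjoint} union of open pieces $W_{i,m-i}\subset U_i\times V_{m-i}$; disjointness lets the piecewise rule ``use $s_i\times t_{m-i}$ on $W_{i,m-i}$'' define a single continuous section $W_m\to P(X\times Y)$. Provided the $W_m$ cover $(X\times Y)^2$, Definition~\ref{def1} then gives $\tc(X\times Y)\le k+\ell-1$.

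Concretely, using paracompactness (which follows from metrizability) I would pick partitions of unity $\{\phi_i\}$ on $X\times X$ and $\{\psi_j\}$ on $Y\times Y$ subordinate to $\{U_i\}$ and $\{V_j\}$. The products $\rho_{i,j}:=\phi_i\psi_j$ form a partition of unity on $(X\times X)\times(Y\times Y)$ subordinate to $\{U_i\times V_j\}$. Set
$$W_{i,j} := \{p : \rho_{i,j}(p)>0 \text{ and } \rho_{i,j}(p)>\rho_{i',j'}(p)\text{ for every }(i',j')\neq(i,j)\text{ with }i'+j'=i+j\},$$
which is open and contained in $\mathrm{supp}(\rho_{i,j})\subset U_i\times V_j$. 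The pieces $W_{i,m-i}$ on a fixed anti-diagonal are pairwise disjoint by construction, so $W_m:=\bigsqcup_{i+j=m} W_{i,m-i}$ has the desired structure.

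The main obstacle is to verify that the $W_m$'s cover $(X\times X)\times(Y\times Y)$: at a point $p$ where, along every anti-diagonal, the maximum of $\{\rho_{i,j}(p):(i,j)\in\Delta_m\}$ is attained by two or more pairs, $p$ belongs to no $W_{i,j}$. My plan for overcoming this is to break all such ties by perturbing the partition of unity --- replacing each $\rho_{i,j}$ by $\rho_{i,j}+\varepsilon_{i,j}\chi_{i,j}$ and renormalizing, where $\chi_{i,j}$ is an auxiliary continuous bump strictly positive on $U_i\times V_j$ and zero outside, and the scalars $\varepsilon_{i,j}>0$ are chosen generically so that at every point some anti-diagonal acquires a unique strict maximizer. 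Equivalently, the covering step may be bypassed by appealing to Schwarz's classical subadditivity $\mathrm{genus}(p_1\times p_2)\le\mathrm{genus}(p_1)+\mathrm{genus}(p_2)$ applied to $\pi_X$ and $\pi_Y$ and translating through $\tc(\cdot)=\mathrm{genus}(\pi_{\cdot})+1$ in the normalisation of Definition~\ref{def1}. This tie-breaking is the step I expect to demand the most care.
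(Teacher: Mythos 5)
Your overall architecture coincides with the paper's: pass to the product cover $\{U_i\times V_j\}$ of $(X\times X)\times(Y\times Y)$ with sections $s_i\times t_j$, and repackage along the anti-diagonals $i+j=m$ into $k+\ell-1$ open sets, using a product partition of unity and a strict-maximality condition to keep the pieces on a fixed anti-diagonal disjoint. The step you yourself flag as unresolved --- that the sets $W_m$ cover --- is where the genuine gap lies, and the fix you propose (replacing $\rho_{i,j}$ by $\rho_{i,j}+\varepsilon_{i,j}\chi_{i,j}$ with ``generic'' $\varepsilon_{i,j}$) is not a proof: you would need to show that a single finite choice of scalars breaks ties simultaneously at uncountably many points, and no argument is offered. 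The paper sidesteps ties by indexing pieces not by single pairs $(i,j)$ but by pairs of nonempty subsets $(S,T)$: $W(S,T)$ is the set of points where $f_ig_j>0$ for all $(i,j)\in S\times T$ and $f_ig_j>f_{i'}g_{j'}$ for all $(i',j')\notin S\times T$. Taking $S$ and $T$ to be the full sets of maximizers of $f_\bullet(A,C)$ and $g_\bullet(B,D)$ shows these sets cover; each $W(S,T)$ lies in $U_i\times V_j$ for any $(i,j)\in S\times T$; and two pieces with $|S|+|T|=|S'|+|T'|$ are disjoint unless equal, so grouping by $|S|+|T|=2,\dots,k+\ell$ yields the $k+\ell-1$ open sets.

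In fact your own construction can be completed with no perturbation at all. Given $p=(A,B,C,D)$, let $S$ be the set of indices maximizing $\phi_i(A,C)$, let $T$ be the set maximizing $\psi_j(B,D)$, and put $i_0=\min S$, $j_0=\min T$. Then $\rho_{i_0,j_0}(p)>0$, and any other $(i',j')$ with $i'+j'=i_0+j_0$ satisfies $i'<i_0$ or $j'<j_0$, hence $\phi_{i'}(A,C)<\phi_{i_0}(A,C)$ or $\psi_{j'}(B,D)<\psi_{j_0}(B,D)$; in either case $\rho_{i',j'}(p)<\rho_{i_0,j_0}(p)$ (if the other factor vanishes, $\rho_{i',j'}(p)=0$). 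So $p\in W_{i_0,j_0}$ and your $W_m$ do cover --- the tie-breaking you expected ``to demand the most care'' is resolved by choosing the right representative, not by perturbing. As for the fallback to Schwarz: invoking the genus inequality for a product of fibrations is legitimate but simply outsources the combinatorial content at issue, and note that under Definition~\ref{def1} the quantity $\tc(X)$ \emph{is} the (unreduced) genus of $\pi$, not $\mathrm{genus}(\pi)+1$ as written; your arithmetic only balances if ``genus'' is read in the reduced normalisation.
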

\begin{proof} Denote $\tc(X)=n$, $\tc(Y)=m$. Let $U_1,\dots, U_n$ be an open cover of $X\times X$ with a continuous section $s_i: U_i\to PX$ for $i=1, \dots, n$.  Let $f_i: X\times X\to {\Bbb R}$, where $i=1, \dots, n$, be a partition of unity
subordinate to the cover $\{U_i\}$.
Similarly, 
let $V_1,\dots, V_m$ be an open cover of $Y\times Y$ with a continuous section $\sigma_j: V_j\to PY$ for $j=1, \dots, m$, and let $g_j: Y\times Y\to {\Bbb R}$, where $j=1, \dots, m$, be a partition of unity subordinate to the cover $\{V_j\}$. 

For any pair of nonempty subsets $S\subset \{1, \dots, n\}$ and $T\subset \{1, \dots, m\}$, let 
$$W(S,T)\subset (X\times Y)\times (X\times Y)$$ 
denote the set of all
4-tuples $(A,B, C,D)\in (X\times Y)\times (X\times Y)$, such that for any $(i,j)\in S\times T$ 
it holds that
$$f_i(A,C) \cdot g_j(B,D) > 0,$$
and for any $(i',j')\notin S\times T$, 
$$f_i(A,C) \cdot g_j(B,D) > f_{i'}(A,C) \cdot g_{j'}(B,D).$$
One easily checks that:

{\it (a) each set $W(S,T)\subset  (X\times Y)\times (X\times Y)$ is open;

(b) $W(S,T)$ and $W(S', T')$ are 
disjoint if neither $S\times T\subset S'\times T'$ nor $S'\times T'\subset S\times T$;

(c) if $(i,j)\in S\times T$, then $W(S,T)$ is contained in $U_i\times V_j$; therefore there exists a continuous motion planning algorithm
over each $W(S,T)$ (it can be described explicitly in terms of $s_i$ and $\sigma_j$);

(d) the sets $W(S,T)$ (with all possible nonempty $S$ and $T$) form a cover of $(X\times Y)\times (X\times Y)$.}

Let us prove (d). Suppose that $(A,B,C,D)\in (X\times Y)\times (X\times Y)$. Let $S$ be the set of all indices 
$i\in \{1, \dots, n\}$,
such that $f_i(A,C)$ equals the maximum of $f_k(A,C)$, where $k=1, 2, \dots, n$. 
Similarly, let $T$ be the set of all $j\in \{1, \dots, m\}$, such that $g_j(B,D)$ equals the maximum of $g_\ell(B,C)$, where
$\ell=1, \dots, m$. Then clearly $(A,B,C,D)$ belongs to $W(S,T)$.

Let $W_k \subset (X\times Y)\times (X\times Y)$ 
denote the union of all sets $W(S,T)$, where $|S|+|T|=k$. Here $k=2, 3, \dots, n+m.$
The sets $W_2, \dots, W_{n+m}$ form an open cover of $(X\times Y)\times (X\times Y)$. 
If $|S|+|T|=|S'|+|T|=k,$ then the corresponding sets $W(S,T)$ and $W(S',T')$ either coincide (if $S=S'$ and $T=T'$) or are disjoint. 
Hence we see (using (c)) that there exists a continuous motion planning algorithm over each open set $W_k$. 
This completes the proof.
\end{proof}

\subsection{The reduced topological complexity} Theorem \ref{prod11} suggests the notation
$$\widetilde\tc(X)=\tc(X)-1,$$
which is called {\it the reduced topological complexity. } Then we have:

\begin{corollary}\label{prod22}
For path-connected metric spaces $X_1, \dots, X_k$ one has 
\begin{eqnarray}\label{prod8}
\widetilde\tc(X_1\times X_2\times \dots\times X_k)\le \sum_{i=1}^k \widetilde\tc(X_i).
\end{eqnarray}
\end{corollary}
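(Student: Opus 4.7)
The plan is to deduce the corollary from Theorem \ref{prod11} by a straightforward induction on $k$, after rewriting the inequality in Theorem \ref{prod11} in terms of the reduced topological complexity.

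First I would observe that the case $k=2$ is merely a restatement of Theorem \ref{prod11}: subtracting $1$ from both sides of $\tc(X_1\times X_2)\le \tc(X_1)+\tc(X_2)-1$ gives
\[
\widetilde\tc(X_1\times X_2) \;=\; \tc(X_1\times X_2)-1 \;\le\; (\tc(X_1)-1)+(\tc(X_2)-1) \;=\; \widetilde\tc(X_1)+\widetilde\tc(X_2).
\]
The case $k=1$ is a trivial equality.

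For the inductive step I would assume the inequality (\ref{prod8}) for $k-1$ factors and apply it to the space $Y=X_1\times\dots\times X_{k-1}$, noting that $Y$ is a path-connected metric space since each $X_i$ is. Then Theorem \ref{prod11} applied to $Y$ and $X_k$ gives $\widetilde\tc(Y\times X_k)\le \widetilde\tc(Y)+\widetilde\tc(X_k)$, and combining with the inductive hypothesis $\widetilde\tc(Y)\le \sum_{i=1}^{k-1}\widetilde\tc(X_i)$ yields the claim.

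There is essentially no obstacle here: the real content is packaged inside Theorem \ref{prod11}, and the sub-additivity of $\widetilde\tc$ over arbitrary finite products is a formal consequence of sub-additivity over binary products together with associativity of the Cartesian product (up to homeomorphism, which is preserved by $\tc$ since $\tc$ is a homotopy invariant by the Corollary following Theorem \ref{domination}). The only minor point to check is that the metric-space hypothesis is preserved under finite Cartesian products, which is standard.
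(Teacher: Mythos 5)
Your proof is correct and is exactly the argument the paper intends: the corollary is stated without proof as an immediate consequence of Theorem \ref{prod11}, and the implicit justification is precisely the induction on $k$ you describe, using the reduced form $\widetilde\tc(X\times Y)\le\widetilde\tc(X)+\widetilde\tc(Y)$ of the product inequality. Nothing further is needed.
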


We shall use the following notation. For a topological space $X$ we shall denote by $\zcl (X)$ the largest integer $k$ such that 
that there exist $k$ zero-divisors $u_1, u_2, \dots, u_k\in  H^\ast(X; \Q) \otimes H^\ast(X;\Q)$ 
having a nontrivial product $$u_1u_2 . . . u_k \not= 0 \in  H^\ast(X; \Q) \otimes H^\ast(X;\Q).$$
Theorem \ref{thm4} can be restated as the inequality 
$$\widetilde\tc(X) \ge \zcl(X).$$
By Example \ref{ex5} we have
$$\zcl(S^n) \ge  \left \{
\begin{array}{ll}
2, & \mbox{if $n$ is even},\\ \\ 

1, & \mbox{if $n$ is odd}.\end{array}
\right.
$$

\begin{lemma} \label{lm7} One has $\zcl(X\times Y) \ge \zcl(X) + \zcl(Y)$.
\end{lemma}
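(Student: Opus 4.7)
The plan is to take zero-divisors $u_1,\dots,u_k\in H^\ast(X;\Q)\otimes H^\ast(X;\Q)$ with nonzero product (where $k=\zcl(X)$) and $v_1,\dots,v_l\in H^\ast(Y;\Q)\otimes H^\ast(Y;\Q)$ with nonzero product (where $l=\zcl(Y)$), and manufacture from them $k+l$ zero-divisors in $H^\ast(X\times Y;\Q)^{\otimes 2}$ whose product is nonzero. The natural tool is the K\"unneth isomorphism $H^\ast(X\times Y;\Q)\cong H^\ast(X;\Q)\otimes H^\ast(Y;\Q)$.

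First I would introduce the ring homomorphisms $\phi\colon H^\ast(X)\to H^\ast(X\times Y)$, $a\mapsto a\otimes 1$, and $\psi\colon H^\ast(Y)\to H^\ast(X\times Y)$, $b\mapsto 1\otimes b$, induced by the two projections. Tensor-squaring them, using the sign rule (\ref{signs}), yields ring homomorphisms $\Phi=\phi\otimes\phi$ and $\Psi=\psi\otimes\psi$ into $H^\ast(X\times Y;\Q)^{\otimes 2}$; a direct check shows that each commutes with the cup-product homomorphism (\ref{prod}), so the lifts $\tilde u_i=\Phi(u_i)$ and $\tilde v_j=\Psi(v_j)$ are again zero-divisors. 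Because $\Phi$ and $\Psi$ are ring maps, the lift of a product equals the product of the lifts, so
$$\tilde u_1\cdots\tilde u_k\cdot\tilde v_1\cdots\tilde v_l=\Phi(u)\cdot\Psi(v),$$
where $u=u_1\cdots u_k$ and $v=v_1\cdots v_l$ are both nonzero by hypothesis.

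It remains to verify that $\Phi(u)\cdot\Psi(v)\neq 0$. Writing $u=\sum_r p_r\otimes q_r$ and $v=\sum_s e_s\otimes f_s$, using the identity $(p\otimes 1)(1\otimes e)=p\otimes e$ in $H^\ast(X\times Y)$, and expanding according to (\ref{signs}), one obtains
$$\Phi(u)\cdot\Psi(v)=\sum_{r,s}(-1)^{|q_r||e_s|}(p_r\otimes e_s)\otimes(q_r\otimes f_s).$$
To show this is nonzero, I would decompose $u=\sum_{i,j}u_{i,j}$ and $v=\sum_{a,b}v_{a,b}$ by bi-degree and pick $(i,j)$ and $(a,b)$ with $u_{i,j}\neq 0$ and $v_{a,b}\neq 0$. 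Under the double K\"unneth identification $H^\ast(X\times Y)^{\otimes 2}\cong H^\ast(X)\otimes H^\ast(Y)\otimes H^\ast(X)\otimes H^\ast(Y)$, the component of the product in bi-degrees $(i,a)$ and $(j,b)$ is exactly $(-1)^{ja}$ times the image of $u_{i,j}\otimes v_{a,b}$ under the transposition of the middle two tensor factors, which is an isomorphism. Since $\Q$ is a field, $u_{i,j}\otimes v_{a,b}\neq 0$, and the product is nonzero.

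The main obstacle I anticipate is sign-bookkeeping from (\ref{signs}), both in verifying that the lifts remain ring maps and in computing the final mixed product. Isolating a single bi-degree component neutralises this: within a fixed 4-tuple of bi-degrees the sign collapses to one global scalar, and the nonvanishing question reduces to plain K\"unneth injectivity over the field $\Q$.
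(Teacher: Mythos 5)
Your proof is correct and is essentially the standard argument. The paper itself gives no proof of this lemma, deferring to Lemma 4.52 of \cite{Finv}; your route --- pulling the zero-divisors back along the two projections via the ring maps $\Phi=\phi\otimes\phi$ and $\Psi=\psi\otimes\psi$, and then isolating a single multidegree component under the double K\"unneth isomorphism to see that the product survives over the field $\Q$ --- is exactly the argument given there.
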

\begin{proof} See \cite{Finv}, Lemma 4.52.

\end{proof}

%
%

\begin{ex} {\rm Suppose that each space $X_i$ is the $n$-dimensional sphere $S^n$. Then using Corollary \ref{prod22} and Lemma \ref{lm7} one has

$$
\tc(\large\prod_{i=1}^k S^n) 
\le 
\left\{
\begin{array}{ll}
2k +1, & \mbox{for $n$ even}, \\ \\
k +1, & \mbox{for $n$ odd}.
\end{array}
\right.
$$
On the other hand we have
\begin{eqnarray*}
\widetilde\tc(\prod_{i=1}^k S^n) &\ge& \zcl(\prod_{i=1}^k S^n)\\
&\ge& \sum_{i=1}^k \zcl(S^n)\\
&=& \left\{
\begin{array}{ll}
2k, & \mbox{if $n$ is even},\\ \\ 
k, & \mbox{if $n$ is odd}.
\end{array}
\right. 
\end{eqnarray*} 

Thus:
\begin{eqnarray}\label{prodspheres}
\tc(\large\prod_{i=1}^k S^n) 
= 
\left\{
\begin{array}{ll}
2k +1, & \mbox{for $n$ even}, \\ \\
k +1, & \mbox{for $n$ odd}.
\end{array}
\right.
\end{eqnarray}
}
\end{ex}

\section{Centralised and distributed controls for large systems}

Consider a large system $S$ consisting of many independently moving parts $S_1, \dots, S_k$. 
As we discussed earlier, the configuration space of this system is the Cartesian product
$X_1\times X_2\times \dots\times X_k$ of the configuration spaces $X_i$ of individual parts $S_i$. 
One may compare the distributed and centralised motion planning algorithms for $S$. 

In the case of {\it distributed motion planning algorithms}, one controls each system $S_i$ independently of the other systems $S_j$. The motion planning algorithm for $S_i$ will have at least $\tc(X_i)$ domains of continuity, and therefore a distributed motion planning algorithm for $S$ will have at least 
\begin{eqnarray}
\large\prod_{i=1}^k \tc(X_i)\end{eqnarray}
of domains of continuity. 

However, in the case of {\it centralised control}, when the system $S$ is viewed as  a single system, there exists a motion planning algorithm with 
$$\tc(\large \prod_{i+1}^k X_i)$$ domains of continuity. Taking into account inequality (\ref{prod8}) we obtain that one may find a centralised motion planning algorithm for $S$ having at most 
\begin{eqnarray}\label{sum4}
1-k + \sum_{i=1}^k \tc(X_i) = 1+ \sum_{i+1}^k \widetilde\tc(X_i)
\end{eqnarray}
domains of continuity. 

In the special case when $\tc(X_i)=a\ge 2$ is independent of $i$, we obtain that any distributed motion planning algorithm has at least $a^k$ domains of continuity and one can find a centralised motion planning algorithm with at most $k(a-1)+1$ domains of continuity. 

In conclusion, {\it the centralised control has potentially significantly more stability compared to the distributed control}.

\section{Tame motion planning algorithms}

\subsection{}  The definition of $\tc(X)$ (see Definition \ref{def1}) deals with open subsets of $X\times X$ admitting continuous sections 
of the path fibration (\ref{fibration}). To construct a motion planning algorithm in practice 
one partitions the whole space $X\times X$ into pieces and defines 
a continuous (often smooth or analytic) section over each of the obtained sets. 
Any such partition necessarily contains sets which are not open and 
 hence we need to be able to operate with subsets of $X\times X$  of more general nature.


\begin{definition}
A topological space $X$ is an Euclidean Neighbourhood Retract (ENR) if it can be embedded into an Euclidean space ${\Bbb R}^k$ such that for some open 
neighbourhood $X\subset U\subset {\Bbb R}^k$ there is a retraction $r: U\to X$, $r|_X=1_X$. 
\end{definition}
It is known that a subset $X \subset {\Bbb R}^k$ is an ENR if and only if it is locally compact and locally contractible, see \cite{Dold}, Chapter 4, §8.
This implies that all finite-dimensional polyhedra, manifolds and semi-algebraic sets are ENRs.

\begin{definition}
Let $X$ be an ENR. A motion planning algorithm $s: X\times X\to PX$ is said to be tame if $X\times X$ can be split into finitely many sets 
$$X\times X= F_1\cup F_2\cup \dots\cup F_k$$
such that

1. Each restriction $s|{F_i}: F_i\to PX$ is continuous, where $i=1, \dots, k$; 

2. $F_i\cap F_j=\emptyset$ for $i\not= j$; 

3. Each $F_i$ is an ENR. 
\end{definition}

It is known that for an ENR $X$, the minimal number of domains of continuity $F_1, \dots, F_k$ in tame motion planning algorithms 
$s: X\times X\to PX$ equals $\tc(X)$, see \cite{F4}, Theorem 13.1.

\begin{ex} {\rm Here we construct a tame motion planning algorithm on the sphere $S^n$. 
Let $F_1\subset S^n\times S^n$ be the set of all pairs $(A, B)$ such that
$A\not=-B$. We may construct a continuous section $s_1: F_1\to PS^n$ by moving
$A$ towards $B$ along the shortest geodesic arc.
Consider now the set $F_2\subset S^n\times S^n$ of all pairs
antipodal points $(A,-A)$. If $n$ is {odd} we may construct a continuous section
$s_2: F_2\to PS^n$ as follows. Fix a non-vanishing tangent vector field $v$ on
$S^n$; such $v$ exists for $n$ odd. Move $A$ towards the antipodal point $-A$ along the semi-circle tangent
to vector $v(A)$.

In the case when $n$ is even the above procedure has to be modified since for $n$ even any vector field $v$ tangent to $S^n$ has at least one zero.
We may find a tangent vector field $v$ having a single zero
$A_0\in S^n$. Denote $F_2=\{(A, -A); A\not=A_0\}$ and define $s_2: F_2\to PS^n$
as in the previous paragraph. The set $F_3=\{(A_0, -A_0)\}$ consists of a single pair; define
$s_3:F_3\to PS^n$ by choosing an arbitrary path from $A_0$ to $-A_0$.
}
\end{ex}

\begin{ex}\label{exdef} {\rm 
 {\bf Constructing sections via deformations.} Let $A\subset X\times X$ be a subset. A continuous section 
 $$s_A: A\to X^I$$ of the fibration (\ref{fibration}) can be viewed as  
 a continuous function of three variables $s_A(a_1, a_2, t)\in X$, where $a_1, a_2\in X$ are such that $(a_1, a_2)\in A$  and $t\in I=[0,1]$; this function must satisfy $s_A(a_1, a_2, 0)=a_1$ and $s_A(a_1, a_2, 1)=a_2$. 
 
 Suppose that a subset $B\subset X\times X$ can be continuously deformed inside $X\times X$ into the subset $A$. In other words, 
 assume that there exists a continuous map 
 \begin{eqnarray}\label{deformation}
 h: B\times I\to X\times X
 \end{eqnarray} 
 such that $h(b, 0)=b$ and $h(b, 1)\in A$ for any $b\in B$. 
We may write 
$$h(b, t)=(h_1(b, t), h_2(b, t))$$ 
where $h_1$ and $h_2$ are the compositions of $h$ with the projections. 
The path $s_A(h(b, 1), \tau)$, where $\tau\in [0,1]$, connects the points $h_1(b,1)$ and $h_2(b,1)$, i.e. $s_A(h(b, 1), 0)=h_1(b, 1)$
 and  $s_A(h(b, 1), 1)=h_2(b, 1)$. Thus, the formula
 \begin{eqnarray}\label{concatenation}
 s_B(b, \tau) =\, \left\{
 \begin{array}{lll}
 h_1(b, 3\tau) & \mbox{for}& \tau\in [0, 1/3],\\ \\
 s_A(h(b,1), 3\tau-1) & \mbox{for}& \tau\in [0, 1/3],\\ \\
h_2(b, 3-3\tau), & \mbox{for}& \tau\in [2/3, 1]
\end{array}
 \right.
 \end{eqnarray}
 defines a path from $b_1$ to $b_2$ which depends continuously on $(b, t)=(b_1, b_2, t)$. Hence we obtain a continuous 
 section $$s_B: B\to X^I$$ of the path fibration (\ref{fibration}) over $B$. Thus a deformation of $B$ into $A$ and a continuous section $s_A$ over $A$ define a continuous section $s_B$ over $B$.

 We shall often use the above remark in the case when $A$ is the diagonal $\Delta_X\subset X\times X$, i.e. $\Delta_X=\{(x, y)\in X\times X; x=y\}$. 
 There exists an obvious section $s: \Delta_X\to X^I$ over the diagonal and hence any deformation (\ref{deformation}) 
 of a subset $B\subset X\times X$ into the diagonal $\Delta_X$
 will automatically give a motion planning section over $B$, using (\ref{concatenation}).

%
%
}

\end{ex}

\section{Topological complexity of collision free motion planning in ${\Bbb R}^d$}\label{sec7}

Consider a system consisting of $n$ small objects moving in the Euclidean space ${\Bbb R}^d$ without collisions. Mathematically we may assume that each of the objects is a point and hence our configuration space is 
$$F({\Bbb R}^d,n)=\{(z_1, \dots, z_n)\in {\Bbb R}^d\times \dots\times {\Bbb R}^d; z_i\not=z_j \quad \mbox{for}\quad i\not= j\}.$$
Here the vectors $z_i\in {\Bbb R}^d$ represent the locations of the moving objects and the condition $z_i\not= z_j$ reflects the requirement that the objects must move without collisions. 

A motion planning algorithm in $F({\Bbb R}^d, n)$ assigns to any pair of configurations 
$$(z_1, \dots, z_n), \, (z'_1, \dots, z'_n)\in F({\Bbb R}^d, n)$$ a continuous curve of configurations 
$$(z_1(t), z_2(t), \dots, z_n(t))\in F({\Bbb R}^d, n),\quad t\in [0,1],$$ such that 
$(z_1(0), \dots, z_n(0))=(z_1, \dots, z_n)$ and $(z_1(1), \dots, z_n(1))=(z'_1, \dots, z'_n)$.

The following theorem gives the topological complexity of this motion planning problem:

\begin{theorem}\label{thm11}  See \cite{FY}, \cite{FG5}.
For $n\ge 2$, one has
\begin{eqnarray}
\tc(F({\Bbb R}^d, n)) = \left\{
\begin{array}{ll}
2n-1, & \mbox{for $d\ge 3 $ odd}, \\ \\
2n-2, &\mbox{for $d\ge 2$ even}.
\end{array}
\right.
\end{eqnarray}
\end{theorem}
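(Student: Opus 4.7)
The plan is to prove matching upper and lower bounds, combining the cohomological lower bound of Theorem \ref{thm4} with an explicit tame motion planning algorithm whose number of regions realises that bound. The detailed construction of the algorithm is the subject of \S\ref{sec8}--\S\ref{sec9}; here I outline the proof strategy.

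For the lower bound I would invoke the classical F.\ Cohen description of the cohomology ring $H^\ast(F(\mathbb{R}^d, n);\mathbb{Q})$: it is generated in degree $d-1$ by classes $e_{ij}$ obtained as pullbacks of the fundamental class of $S^{d-1}$ along the Gauss maps
\[
p_{ij}: F(\mathbb{R}^d, n) \to S^{d-1}, \qquad (z_1, \dots, z_n) \mapsto \frac{z_i - z_j}{|z_i - z_j|},
\]
subject to $e_{ij}^2 = 0$, $e_{ji} = (-1)^d e_{ij}$, and the three-term (Arnold/Yang--Baxter) relation. For each $e_{ij}$ form the zero-divisor
\[
\bar e_{ij} \, = \, 1 \otimes e_{ij} - (-1)^{d-1} e_{ij}\otimes 1 \; \in \; H^\ast(F)\otimes H^\ast(F),
\]
which lies in the kernel of the cup-product homomorphism. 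Then I would show, by a combinatorial argument in the tensor square of the Cohen algebra, that a carefully chosen product of $2n-2$ such zero-divisors is nonzero when $d$ is even and that a product of $2n-1$ is nonzero when $d$ is odd. This is the Farber--Yuzvinsky calculation of \cite{FY}, \cite{FG5}; the parity of $d$ enters through the sign in the definition of $\bar e_{ij}$ and through the vanishing $e_{ij}^2=0$ together with the symmetry $e_{ji}=(-1)^d e_{ij}$. By Theorem \ref{thm4} this yields $\tc(F(\mathbb{R}^d, n)) \ge 2n-1$ for $d$ odd and $\ge 2n-2$ for $d$ even.

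For the upper bound I would construct a tame motion planning algorithm, following the approach inspired by \cite{HME}. The configuration space is stratified by combinatorial data attached to pairs of configurations: one projects both configurations $(z_1,\dots,z_n)$ and $(z'_1,\dots,z'_n)$ onto a generic direction and records, for instance, how many coordinates of either configuration share the same projected value, or the combinatorial type of the linear order. Over the open stratum one has a straightforward continuous section built from straight-line motions with a preliminary resolution of collisions via Example \ref{exdef}; the more degenerate strata are handled by lowering dimension one at a time, using an auxiliary non-vanishing vector field on $S^{d-1}$ to choose a canonical direction in which to break ties. The parity of $d$ enters here because such a vector field exists on $S^{d-1}$ precisely when $d$ is odd, which saves one stratum and matches the drop from $2n-1$ to $2n-2$ when $d$ is even.

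The main obstacle is the explicit construction of the optimal algorithm: the naive product-inequality estimate from Theorem \ref{prod11} or the construction of \cite{F4} only gives a quadratic-in-$n$ upper bound, so a genuinely new idea is required to organise the strata so that the regions of continuity can be consolidated into only $2n-1$ (respectively $2n-2$) ENR pieces. Once this is achieved, the equality between the minimal number of tame domains and $\tc(X)$ recalled after the definition of tame algorithms closes the argument.
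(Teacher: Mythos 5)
Your overall two-sided strategy (cohomological lower bound plus an explicit tame algorithm) is the right one and matches what the paper and the cited sources \cite{FY}, \cite{FG5} do, but there are two concrete errors and one structural gap. First, your explanation of where the parity of $d$ enters the upper bound is backwards: a nowhere-zero tangent vector field on $S^{d-1}$ exists precisely when $S^{d-1}$ is odd-dimensional, i.e.\ when $d$ is \emph{even} (this is exactly what \S\ref{sec9} uses to cut the count to $2n-2$), not when $d$ is odd as you state. As written, your mechanism for saving a stratum points the wrong way and cannot produce the claimed improvement for even $d$. Second, your zero-divisor $1\otimes e_{ij}-(-1)^{d-1}e_{ij}\otimes 1$ is not in the kernel of the cup-product homomorphism when $d$ is even (its image is $2e_{ij}\neq 0$ over $\mathbb{Q}$); the correct zero-divisor is $1\otimes e_{ij}-e_{ij}\otimes 1$ for every $d$, and the parity enters only through the Koszul signs of (\ref{signs}) when these elements are multiplied in the tensor square, together with $e_{ij}^2=0$ and $e_{ji}=(-1)^de_{ij}$.

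Beyond these, note that for the odd-$d$ lower bound the paper takes a much lighter route than the Cohen-algebra computation you propose: $F(\mathbb{R}^d,n)$ retracts onto a product $\prod_{i=1}^{n-1}S^{d-1}$ of $n-1$ spheres, so Theorem \ref{domination} and formula (\ref{prodspheres}) give $\tc\ge 2n-1$ at once; the full zero-divisor calculation is only genuinely needed for $d$ even, and there the Arnold relations make the assertion that a product of $2n-2$ zero-divisors survives a nontrivial claim which you do not substantiate. Finally, the upper bound is the actual content of \S\ref{sec8}--\S\ref{sec9}, and you explicitly leave its construction as ``the main obstacle'': stratifying pairs of configurations by the cardinalities of their projections onto a line, desingularizing, and regrouping the $n^2$ strata $A_i\times A_j$ into the $2n-1$ sets $W_k=\bigcup_{i+j=k}A_i\times A_j$ (with the further colinear/anti-colinear refinement for even $d$) is precisely the missing idea. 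As it stands the proposal is a correct road map with the two hard steps deferred, marred by the sign and parity errors above.
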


We see that the topological complexity of collision free motion planning in the Euclidean space is roughly $\sim 2n$ where $n$ is the number of controlled objects. 
One naturally wants to know explicit motion planning algorithms for $F({\Bbb R}^d,n)$ with optimal topological complexity as given by Theorem \ref{thm11}.
Such algorithms will be given in the following two sections. 

The paper \cite{F4} suggested a motion planning algorithm in $F({\Bbb R}^d,n)$ having topological complexity quadratic in $n$. 
In \cite{HME}, Hugo Mas-Ku and Enrique Torres-Giese suggested a motion planning algorithm for $F({\Bbb R}^2, n)$ with complexity $2n-1$. 
They also briefly indicated how their algorithm may work for $F({\Bbb R}^d, n)$ with any $d\ge 2$; such an algorithm would 
be optimal for $d\ge3$ odd. 


The paper \cite{HME} also contains the useful observation that the lower bound of Theorem \ref{thm11} for the case $d\ge 3$ 
odd can be deduced from the fact that the configuration space
 $F({\Bbb R}^d, n)$ contains the product $\prod_{i=1}^{n-1} S^{d-1}$ of $n-1$ copies of the sphere $S^{d-1}$ as a retract. 
 We repeat this argument below.
 
 Let us describe the maps
\begin{eqnarray}
\prod_{i=1}^{n-1} S^{d-1} \, \stackrel\alpha\to\,  F({\Bbb R}^d, n)\, \stackrel \beta\to\,  \prod_{i=1}^{n-1} S^{d-1};
\end{eqnarray}
we want  $\beta$ to be a retraction on the image of $\alpha$. 
 We think of $S^{d-1}\subset {\Bbb R}^d$ as being the unit sphere with centre at the origin. For 
$$(u_1, \dots, u_{n-1})\in \prod_{i=1}^{n-1} S^{d-1}$$ we set
$$\alpha(u_1, u_2, \dots, u_{n-1}) = (z_1, z_2, \dots, z_n)\in F({\Bbb R}^d, n)$$ where $z_1=0$ and for $i=1, \dots, n-1$ one has
$$ z_{i+1} =z_i +3^{i-1} u_i.$$
Clearly, $\alpha$ is injective. We have for $k\ge 1$, 
$$z_{i+k} - z_i = 3^{i+k-2}u_{i+k-1} + 3^{i+k-3}u_{i+k-2}+\dots +3^{i-1}u_{i}$$
and 
$$|z_{i+k} - z_i|\ge 3^{i+k-2}- 3^{i+k-3} - 3^{i+k-4}- \dots - 3^{i-1} = \frac{1}{2} 3^{i-1}(3^{k-1} +1). $$
This shows that $z_i\not= z_j$ for $i\not= j$, i.e. the map $\alpha$ indeed takes its values in the configuration space $F({\Bbb R}^d, n)$. 

Next we define the second map $\beta: F({\Bbb R}^d, n)\to \prod_{i=1}^{n-1} S^{d-1}$,
$$\beta(z_1, z_2, \dots, z_n) = (u_1, u_2, \dots, u_{n-1})$$ where 
$$u_i = \frac{z_{i+1} - z_i}{|z_{i+1} - z_i|}\in S^{d-1}, \quad \quad i=1, 2, \dots, n-1.$$
It is obvious that $\beta\circ \alpha$ is the identity map, i.e. $\beta$ is a retraction of the image of $\alpha$. 

Assuming that $d\ge 3$ is odd and $n\ge 2$, one applies Theorem \ref{domination} and formula (\ref{prodspheres}) to obtain 
$$\tc(F({\Bbb R}^d, n)) \ge \tc(\prod_{i=1}^{n-1} S^{d-1}) = 2n-1.$$

\section{A motion planning algorithm in $F({\Bbb R}^d, n)$}\label{sec8}

In this section we present a tame motion planning algorithm in $F({\Bbb R}^d, n)$ with $2n-1$ regions of continuity. 
This algorithm works for any $d\ge 2$ and $n\ge 2$; it is optimal in the case when $d\ge 3$ is odd. 
In the following section we suggest a modification of this algorithm which works in the case of 
$d\ge 2$ even and has $2n-2$ regions of continuity; this algorithm is optimal for $d\ge 2$ even in the sense that it has 
the smallest possible number of regions of continuity. 

The algorithm we describe in this section  can be used in designing 
practical systems controlling motion of many objects moving in space without collisions. 

\subsection{The space $F(L, n)$}\label{secfl} Fix a line $L\subset {\Bbb R}^d$ and denote by $p:{\Bbb R}^d\to L$ the orthogonal projection. 
Let $e\in {\Bbb R}^d$ denote a unit vector in the direction of $L$. The vector $e$ determines an order on $L$: we say that for $a, b\in L$ one has $a\le b$ if
the scalar product $\langle b-a, e\rangle $ is non-negative. 

Note that $F(L, n)$ is naturally a subset of $F({\Bbb R}^d, n)$ and while the configuration space $F({\Bbb R}^d, n)$ is connected, 
the configuration space $F(L, n)$ is disconnected. More precisely, the space $F(L, n)$ contains $n!$ connected components and each of the components of $F(L, n)$ is contractible. Indeed, a configuration $$C= (z_1, \dots, z_n)\in F(L, n)$$ determines a permutation 
\begin{eqnarray*}\label{perm}\tau= (i_1, \dots, i_n)\in \Sigma_n\end{eqnarray*} of the set $\{1, 2, \dots, n\}$ where
$$z_{i_1}<z_{i_2} < \dots< z_{i_n}.$$ This permutation describes the order in which the points $z_i$ appear on the line $L$. 
Clearly, two configurations 
of $F(L, n)$ lie in the same connected component of $F(L, n)$ if and only if they have the same ordering, i.e. they determine the same permutation. 

For a permutation $\tau\in \Sigma_n$ we denote by $F(L, n, \tau)$ the set of all configurations $C= (z_1, \dots, z_n)\in F(L, n)$ such that the order of points 
$z_i$ on the line $L$ is described by the permutation $\tau$. We have 
\begin{eqnarray}\label{split}
F(L, n) \, =\, \bigsqcup_{\tau\in \Sigma_n} F(L, n, \tau).\end{eqnarray}
To show that each space $F(L, n, \tau)$ is contractible  we note that for two configurations $C, C'\in F(L, n, \tau)$ where $C=(z_1, \dots, z_n)$ and
 $C'=(z'_1, \dots, z_n')$ we may define the linear deformation 
 \begin{eqnarray}\label{lindef}
 z_i(t)=(1-t) z_i+tz'_i, \quad t\in [0,1], \quad i=1, \dots, n,\end{eqnarray}
 which represents a continuous path in $F(L, n, \tau)$. Clearly, if $z_i<z_j$ and $z'_i< z'_j$ then $z_i(t)<z_j(t)$ for any $t\in [0,1]$. 
 In other words, we have a continuous section 
 \begin{eqnarray}\label{sectionpi}
 \sigma_\tau: F(L, n, \tau)\times F(L, n, \tau) \to F(L, n, \tau)^I\end{eqnarray}
 of the path fibration 
 $$F(L, n, \tau)^I \to F(L, n, \tau)\times F(L, n, \tau).$$
 
Fix a specific configuration $C_\tau\in F(L, n, \tau)$ for each permutation $\tau\in \Sigma_n$. Since $d>1$, the configurations $C_\tau$ and $C_{\tau'}$ can be connected by a continuous path
$$\gamma_{\tau, \tau'}: [0,1]\to F({\Bbb R}^d, n), \quad \gamma_{\tau, \tau'}(0) = C_\tau, \quad \gamma_{\tau, \tau'}(1) = C_{\tau'}.$$
The family of paths $\{\gamma_{\tau, \tau'}\}$ gives a continuous section of the path fibration over the (discrete) subset $$\Sigma_n\times \Sigma_n\subset F(L, n)\times F(L, n).$$
Since (\ref{lindef}) gives a deformation of the set $F(L, n, \tau)\times F(L, n, \tau')$ to the single point 
$\{(C_\tau, C_{\tau'})\} \subset \Sigma_n\times \Sigma_n$,
we obtain via concatenation (as explained in Example \ref{exdef}) 
 a continuous section
\begin{eqnarray}\label{sigmal}
\sigma: F(L, n)\times F(L, n) \to F({\Bbb R}^d, n)^I
\end{eqnarray}
of the path fibration, i.e. such that the composition 
\begin{eqnarray}\label{section1}
F(L, n)\times F(L, n)\stackrel \sigma \to F({\Bbb R}^d, n)^I \stackrel \pi\to F({\Bbb R}^d, n)\times F({\Bbb R}^d, n)
\end{eqnarray}
coincides with the inclusion 
$F(L, n)\times F(L, n)\subset  F({\Bbb R}^d, n)\times F({\Bbb R}^d, n)$. Recall that $\pi$ denotes the paths fibration (\ref{fibration}). 

\subsection{Sets $A_i$.} For a configuration 
$C\in F({\Bbb R}^d, n)$, where $C=(z_1, \dots, z_n)$ with $z_i\in {\Bbb R}^d$, $z_i\not=z_j$ for $i\not=j$, consider the set of projection points 
$$p(C)=\{p(z_1), \dots, p(z_n)\}, \quad \quad p(z_i)\in L, \quad i=1, \dots, n.$$
The cardinality of this set will be denoted $\card(C)$. Here the symbol \lq\lq $\card$\rq\rq stands for {\it \lq\lq cardinality of projection\rq\rq.}
Note that $\card(C)$ can be any number 
$1, 2, \dots, n$. 
Let $A_i$ denote the set of all configurations $C\in F({\Bbb R}^d, n)$ with $\card(C)=i$. Clearly, $A_i$ is an ENR. 

The set $A_n$ is open and dense in $F({\Bbb R}^d, n)$. If $C=(z_1, \dots, z_n)\in A_n$ then $p(C)\in F(L, n)$ and the formula
\begin{eqnarray}\label{projection}
z_i(t)=z_i+t (p(z_i)-z_i), \quad i=1, \dots, n
\end{eqnarray}
defines a continuous deformation of $A_n$ onto $F(L,n)$. 
%
%

In general, 
the closure of each set $A_i$ is contained in the union of the sets $A_j$ with $j\le i$, i.e. 
$$\overline A_i\subset \bigcup_{j\le i}A_j.$$

For a configuration 
$C\in A_i$, where $i\ge 2$, $C=(z_1, \dots, z_n)$ denote 
$$\epsilon(C)= \frac{1}{n} \min \{|p(z_r) -p(z_s)|; p(z_r)\not= p(z_s)\}.$$ 
For $i=1$ the formula above makes no sense and we set $\epsilon(C)=1$ for any $C\in A_1$. 

For  $C\in A_i$ and $t\in [0,1]$, where $C=(z_1, \dots, z_n)$, define
$$F_i(t)(C)=(z_1(t), \dots, z_n(t)), \quad\mbox{where}\quad z_j(t)=z_j+t(j-1)\epsilon(C)e,\quad j=1, \dots, n.$$
This defines a continuous deformation of $A_i$ into $A_n$ inside $ F({\Bbb R}^d, n)$; we shall call the deformation $F_i: A_i\times I\to F({\Bbb R}^d, n)$ 
 {\it \lq\lq desingularization\rq\rq. }

\subsection{Sections $\sigma_{ij}$.} 
We have constructed several deformations and a section over $F(L, n)$; applying iteratively the construction of Example \ref{exdef} 
we obtain a continuous section 
\begin{eqnarray}\label{sigmaij1}
\sigma_{ij}: A_i \times A_j\to F({\Bbb R}^d, n)^I, \quad i, j = 1, 2, \dots, n,
\end{eqnarray}
of the path fibration, i.e. such that the composition 
\begin{eqnarray}
A_i\times A_j \stackrel{\sigma_{ij}}\to F({\Bbb R}^d, n)^I \stackrel\pi\to F({\Bbb R}^d, n)\times F({\Bbb R}^d, n)
\end{eqnarray}
coincides with the inclusion $A_i\times A_j \to F({\Bbb R}^d, n)\times F({\Bbb R}^d, n)$.
Indeed, the desingularization deformation $F_i\times F_j$ takes $A_i\times A_j$ into $A_n\times A_n$; then we apply the deformation (\ref{projection}) which takes 
$A_n\times A_n$ into 
$F(L, n)\times F(L,n)$; and finally we apply section (\ref{sigmal}). 
Let us emphasise that the above description of $\sigma_{ij}$ is totally algorithmic and practically implementable.

%
%
%
%

\subsection{Combining the regions of continuity.} The sets $A_i\times A_j$ where $i, j=1, \dots, n$, are mutually disjoint and cover the whole product $F({\Bbb R}^d, n)\times F({\Bbb R}^d, n)$. 
Over each of these sets we have a continuous section $\sigma_{ij}$; in total we have $n^2$ of these sets. In this subsection we observe that one may combine these sets into $2n-1$ sets 
$W_k$, where $k=2, \dots, 2n$,
such that the sections $\sigma_{ij}$ determine a continuous section over each $W_k$. 

Define
\begin{eqnarray}
W_k \, = \, \bigcup_{i+j=k} A_i\times A_j, \quad \mbox{where} \quad k=2, 3, \dots, 2n.
\end{eqnarray}
We know that the closure of each set $A_i$ is contained in the union of the sets $A_r$ with $r\le i$. 
This implies that for any two distinct pairs $(i, j)$ and $(i', j')$ with $i+j=k=i'+j'$ one has 
$$\overline{A_i\times A_j}\, \cap\,  (A_{i'}\times A_{j'}) \, =\, \emptyset.$$
Therefore  no limit point of $A_i\times A_j$ lies in $A_{i'}\times A_{j'}$ for $i+j=i'+j'$. Hence the sections $\sigma_{ij}$, see (\ref{sigmaij1}), jointly define a continuous section of the path fibration 
$\pi: F({\Bbb R}^d, n)^I \to F({\Bbb R}^d, n)\times F({\Bbb R}^d, n)$
over each set $W_k$. Thus, we have constructed a tame motion planning algorithm in $F({\Bbb R}^d, n)$ having $2n-1$ domains of continuity $W_2, W_3, \dots, W_{2n}$.

\section{A motion planning algorithm in $F({\Bbb R}^d, n)$ with $d\ge 2$ even}\label{sec9}

In this section we improve the motion planning algorithm in $F({\Bbb R}^d, n)$ of the previous section under the assumption that $d\ge 2$ is even. 
This motion planning algorithm will have $2n-2$ domains of continuity. 

For a configuration $C=(z_1, \dots, z_n)\in F({\Bbb R}^d, n)$ consider the line $L'=L'_C$ through the origin which is parallel to the affine line $L=L_C$ connecting the points $z_1$ and $z_2$. 
The line $L_C$ has a natural orientation 
from $z_1$ to $z_2$ and we denote by $e=e_C\in L'_C$ the unit vector $$e_C= \frac{z_2-z_1}{|z_2-z_1|}.$$ Let $p_C: {\Bbb R}^d\to L_C$ denote the orthogonal projection. For a configuration $C=(z_1, \dots, z_n)\in F({\Bbb R}^d, n)$ we denote by $\card(C)$ the cardinality of the set 
$\{p_C(z_1), \dots, p_C(z_n)\}$ of the projection points; note that $\card(C)\in \{2, \dots, n\}$. 

\subsection{Desingularization} For a configuration 
$C\in F({\Bbb R}^d, n)$,  $C=(z_1, \dots, z_n)$
with $\card(C)=i$,
 where $i\ge 2$, denote
$$\epsilon(C)= \frac{1}{n} \min \{|p_C(z_r) -p_C(z_s)|; p_C(z_r)\not= p_C(z_s)\}.$$ 
For $t\in [0,1]$ and $C$ as above define
$F_i(t)(C)=(z_1(t), \dots, z_n(t))$, where $$z_j(t)=z_j+t(j-1)\epsilon(C)e_C$$
for $j=1, \dots, n.$
This gives a \, \lq\lq desingularization\rq\rq\,  deformation $F_i(t)(C)$ with $F_i(0)(C)=C$ and 
$$\card(F_i(t)(C))=n\quad\mbox{for} \quad t\in (0, 1].$$ 
Note that the lines $L_C$ and $L'_C$ do not change under the desingularization, i.e. $L_{F_i(t)(C)}=L_C$ and $L'_{F_i(t)(C)}=L'_C$. 
Besides, the desingularization $F_i(t)(C)$ is continuous as a function of $(t, C)$ if we restrict it to the set of configurations $C$ with $\card(C)=i$ where $i$ is fixed. 

\subsection{Colinear configurations}
For $i, j= 2, \dots, n$ we denote by $A_{ij}$ the set of all pairs of configurations $(C, C')$ where $C, C'\in F({\Bbb R}^d, n)$ such that 
$e_C\not= -e_{C'}$, $\card(C)=i$ and $\card(C')=j$. 
Similarly, for $i, j= 2, \dots, n$ we denote by $B_{ij}$ the set of all pairs of configurations $(C, C')$ where $C, C'\in F({\Bbb R}^d, n)$ such that 
$e_C= -e_{C'}$, $\card(C)=i$ and $\card(C')=j$. 

Clearly, 
\begin{eqnarray}\label{closure}
\overline{B_{ij} }\subset \bigcup_{\stackrel{r\le i} {s\le j}} B_{rs}, \quad \quad \overline{A_{ij} }\subset \bigcup_{\stackrel{r\le i} {s\le j}} A_{rs}\cup 
\bigcup_{\stackrel{r\le i} {s\le j}}B_{rs}.
\end{eqnarray}

Denote by $X\subset F({\Bbb R}^d, n)\times F({\Bbb R}^d, n)$ the set of all pairs $(C, C')$ of configurations such that (a) the vectors $e_C$ and $e_{C'}$ are not opposite to each other, i.e. $e_C \not= -e_{C'}$, and (b) the configurations $C$ and $C'$ are {\it colinear}, i.e. $C\in F(L_C, n)$ and $C'\in F(L_{C'}, n)$.

Consider also the subset $X'\subset X$ consisting of pairs of colinear configurations $(C, C')$ with $e_{C}=e_{C'}$ and $L_C=L_{C'}$. 
%

Besides, we shall denote by $Y\subset F({\Bbb R}^d, n)\times F({\Bbb R}^d, n)$ the set of all pairs of colinear configurations $(C, C')$ such that 
the vectors $e_C$ and $e_{C'}$ are opposite to each other, i.e. $e_C = -e_{C'}$. Note that in this case $L_C=L_{C'}$. 

The union $X\cup Y$ is the set of all pairs of colinear configurations.

\subsection{Deformations $\sigma_{ij}$}\label{sec93}
Next we define the deformations 
\begin{eqnarray}\label{sigmaij}
\sigma_{ij}: A_{ij}\to (F({\Bbb R}^d, n)\times F({\Bbb R}^d, n))^I, \\  \sigma'_{ij}: B_{ij}\to (F({\Bbb R}^d, n)\times F({\Bbb R}^d, n))^I.\nonumber
\end{eqnarray}
deforming $A_{ij}$ into $X$ and $B_{ij}$ into $Y$ correspondingly, i.e.
such that 
\begin{enumerate}
\item $\sigma_{ij}(C, C') (0) = (C, C')$ and $\sigma_{ij}(C, C') (1) \in X$, \\
\item $\sigma'_{ij}(C, C') (0) = (C, C')$ and $\sigma'_{ij}(C, C') (1) \in Y$
\end{enumerate}
Given a pair $(C, C')\in A_{ij}$, we apply first the desingularization deformations $F_i(t)(C)$ and $F_j(t)(C')$  
taking the pair $(C, C')$ to a pair of configurations $(C_1, C'_1)$ with 
$\card(C_1)=n$, $L_{C_1}=L_{C}$ and $\card(C'_1)=n$, $L_{C'_1}=L_{C'}$. 
Next we apply the linear deformation (\ref{projection}) taking the pair $(C_1, C'_1)$ to a pair of colinear configurations 
$(C_2, C'_2)$ where $C_2\in F(L_C, n)$ and $C'_2\in F(L_{C'}, n)$. 
The deformation $\sigma_{ij}$ is the concatenation of the two deformations described above; the deformation $\sigma'_{ij}$ is defined similarly. 

\subsection{}\label{sec94} Next we deform $X$ into $X'$ by a deformation $X\times I\to F({\Bbb R}^d, n)\times F({\Bbb R}^d, n)$ as follows. 
Given two colinear configurations 
$C=(z_1, \dots, z_n)$ and $C'=(z'_1, \dots, z'_n)$ with vectors $e_C$ and $e_{C'}$ satisfying $e_C \not = - e_{C'}$. Making parallel translation, we may assume that both lines $L_C$ and $L_{C'}$ pass through the origin $0\in {\Bbb R}^d$. We may now view $e_C$ and $e_{C'}$ as points of the unit sphere 
$S^{d-1}\subset {\Bbb R}^d$ and, since they are not antipodal, there exists a unique geodesic path $e(t)\in S^{d-1}$ of minimal length connecting them. 
We obtain a continuous path $V_t$ of orthogonal transformations of $V_t: {\Bbb R}^d\to{\Bbb R}^d$, which is identical on the orthogonal complement to the subspace spanned by 
the vectors $e_C$ and $e_{C'}$, and such that $V_t(e_C)=e(t)$. Applying $V_t$ to the configuration 
$C=(z_1, \dots, z_n)$ we get a path $(V_t(z_1), \dots, V_t(z_n))$
in $F({\Bbb R}^d, n)$ taking 
$C$ to a colinear configuration $C"$ such that $e_{C"}= e_{C'}$. 

\subsection{} \label{sec95}
Finally we observe that there exist continuous sections 
\begin{eqnarray}\label{sigmas}
\sigma_{X'}: X'\to F({\Bbb R}^d, n)^I \quad \mbox{and}\quad \sigma_Y: Y\to F({\Bbb R}^d, n)^I
\end{eqnarray}
of the path space fibration 
\begin{eqnarray}\label{fib}
\pi: F({\Bbb R}^d, n)^I \to F({\Bbb R}^d, n)\times F({\Bbb R}^d, n)
\end{eqnarray}
over the sets $X'$ and $Y$ correspondingly. Here we will use our assumption that $d\ge 2$ is even. Let us start with $\sigma_{X'}$. 
Given two colinear configurations 
$C=(z_1, \dots, z_n)$ and $C'=(z'_1, \dots, z'_n)$ with $L=L_C=L_{C'}$ and $e_C=e_{C'}$. The points $z_1, \dots, z_n, z'_1, \dots, z'_n$ lie on the oriented  line $L$ and their "ordering" determines two permutations $(i_1, i_2, \dots, i_n)$ and $(j_1, j_2, \dots, j_n)$ such that $z_{i_1}<z_{i_2}<\dots <z_{i_n}$ and 
$z'_{j_1}<z'_{j_2}<\dots <z'_{j_n}$. Since $d\ge 2$ is even, the unit sphere $S^{d-1}$ admits a continuous and nowhere zero tangent vector field. This means that we may continuously choose a unit vector $e'_C\in S^{d-1}$ perpendicular to $e_C$ for any colinear configuration $C$. 
Now we define the following path in $F({\Bbb R}^d, n)$ which takes $C$ onto $C'$ and is continuous as a function of $(C, t)$; we set
$C^t = (z_1^t, z_2^t, \dots, z_n^t)$ where 
$$z_{i_k}^t = \left\{
\begin{array}{lll}
z_{i_k}+3tk e'_C &\mbox{for}& t\in [0,1/3],\\ \\
z_{i_k}+ke'_C+(3t-1)(z_{j_k}-z_{i_k})&\mbox{for}& t\in [1/3,2/3],\\ \\
z_{j_k} +k(3-3t)e'_C&\mbox{for}& t\in [2/3, 1].
\end{array}
\right.
$$
This formula defines a continuous section of (\ref{fib}) over $X'$ which we shall denote by $\sigma_{X'}$. 
The section $\sigma_Y$, see (\ref{sigmas}), is defined by the similar formulae. 

\subsection{} Now we may concatenate (as explained in example (\ref{exdef})) the deformations of subsections (\ref{sec93}), (\ref{sec94}) and the section $\sigma_{X'}$ (see (\ref{sec95})) to obtain a continuous  section 
$$s_{ij}: A_{ij}\to F({\Bbb R}^d, n)^I$$
of the path fibration over each $A_{ij}$ where $i, j= 2, \dots, n$. Similarly, concatenating the deformation $\sigma'_{ij}$ (see (\ref{sigmaij})) and the section 
$\sigma_Y$ (see (\ref{sigmas})) we obtain a continuous section 
$$s'_{ij}: B_{ij}\to F({\Bbb R}^d, n)^I, \quad i,j=2, \dots, n.$$

\subsection{Repackaging the regions of continuity} The sets $A_{ij}, B_{rs}$ are pairwise disjoint and their union is $F({\Bbb R}^d, n)\times F({\Bbb R}^d, n)$; each of these sets is an ENR and on each of these sets we have constructed a continuous section $s_{ij}$ or $s'_{ij}$. 
%
%
%
%
%
%
%
%

We can define the sets $W_k$ (repackaging) as follows
$$W_k = \bigcup_{i+j=k}A_{ij} \, \cup\, \bigcup_{r+s=k+1}B_{rs}$$
where $k=3, \dots, 2n$. From (\ref{closure}) we see that for $i+j=k$ and $r+s=k+1$ no limit point of $A_{ij}$ may be contained in $B_{rs}$. 
The sections $s_{ij}$ and $s'_{rs}$ define a continuous section of the path fibration over each $W_k$, where $ k=3, \dots, 2n$. 
As the result we obtain $2n-2$ regions of continuity $W_3, W_4, \dots, W_{2n}$; note that $W_3=B_{22}$.

%
%
%
%
%
%
%
%
%
%
%
%
%
%
%
%
%

\section{Configuration Spaces of Graphs}\label{sec10}

\subsection{} Let $\Gamma$ be a connected finite graph. The symbol $F(\Gamma, n)$ denotes the
configuration space of $n$ distinct particles on $\Gamma$. In other words,
$F(\Gamma, n)$ is the subset of the Cartesian product
\begin{eqnarray*}
\underbrace{\Gamma\times \Gamma\times \dots \times \Gamma}_{n \, \,
\mbox{\scriptsize times}} =\Gamma^{ n}
\end{eqnarray*}
consisting of configurations $C=(z_1, z_2, \dots, z_n)$ where $z_i\in \Gamma$ and
$z_i\not= z_j$ for $i\not= j$. The topology of $F(\Gamma,n)$ is induced from its embedding into $\Gamma^{n}$.

Configuration spaces of graphs were studied by R. Ghrist, D. Koditschek and A.
Abrams, see \cite{AA}, \cite{AG},  \cite{Ghrist01}, \cite{GKod}.
To illustrate the importance of these configuration spaces for robotics one may
mention the control problems where a number of automated guided vehicles (AGV)
have to move along a network of floor wires \cite{GKod}. The motion of the
vehicles must be safe: it should be organized so that collisions do not
occur. If $n$ is the number of AGV then the natural configuration space of this
problem is the space $F(\Gamma, n)$ where $\Gamma$ is a graph describing the network of floor wires. 
Here we idealise reality
by assuming that the vehicles have size 0 (i.e. they are points). 


The first question to ask is whether the configuration space $F(\Gamma, n)$ is connected.
Clearly $F(\Gamma, n)$ is disconnected if $\Gamma=[0,1]$ is a closed interval (and $n\geq
2$) or if $\Gamma=S^1$ is the circle and $n\geq 3$. These are the only examples of this
kind as the following simple lemma claims:

\begin{lemma}\label{connected}
Let $\Gamma$ be a connected finite graph having at least one essential vertex.
 Then the configuration
space $F(\Gamma,n)$ is connected.
\end{lemma}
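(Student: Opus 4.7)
The plan is to prove that every configuration in $F(\Gamma,n)$ can be path-connected to a fixed \emph{standard} configuration concentrated near the essential vertex. Let $v$ be an essential vertex of $\Gamma$ with three distinct incident edges $e_1, e_2, e_3$. Choose $n$ distinct interior points $q_1, \dots, q_n$ on $e_1$ at distances $\epsilon, 2\epsilon, \dots, n\epsilon$ from $v$, for $\epsilon$ small, and set $C_0 = (q_1, \dots, q_n)$. It suffices to establish two claims: (i) every $C = (z_1, \dots, z_n) \in F(\Gamma,n)$ is path-connected to some permuted standard configuration $C_\sigma = (q_{\sigma(1)}, \dots, q_{\sigma(n)})$, and (ii) any two such standard configurations $C_\sigma$ and $C_\tau$ are path-connected in $F(\Gamma,n)$.

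For (i), I would organize a sweep-to-parking procedure: the particles $z_i$ are routed toward slots on $e_1$ one at a time, and whenever the current particle's route is blocked by another particle $z_j$, the obstacle $z_j$ is temporarily detoured onto whichever of $e_1, e_2, e_3$ currently has room, using $v$ as a traffic hub. Since $v$ has degree at least three, even when two of its incident edges are congested with detoured particles, a third edge remains available. For (ii), the symmetric group is generated by adjacent transpositions, so it suffices to swap the particles at $q_i$ and $q_{i+1}$: one temporarily pushes the particle at $q_i$ a short way up $e_2$, slides the particle at $q_{i+1}$ along $e_1$ toward $v$ into the slot $q_i$ (shifting the farther particles at $q_{i+2}, \dots, q_n$ slightly outward along $e_1$ to maintain room), and finally brings the parked particle back through $v$ down along $e_1$ into slot $q_{i+1}$.

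The main obstacle is justifying step (i) rigorously. Removing $n-1$ fixed points from $\Gamma$ can disconnect it, so one cannot in general move a single particle to its target while holding the others still, and the argument must coordinate the motions of all the particles. The essential vertex resolves this by allowing obstacle particles to move out of the way using the Y-maneuver. A clean implementation is to perform step (i) in two stages: first, by sub-induction on the number of particles still outside a small neighborhood $N$ of $v$, sweep each particle into $N$ along a shortest path in $\Gamma$, bumping already-parked particles aside via the Y-maneuver and distributing them across $e_1, e_2, e_3$; once all $n$ particles lie inside $N$ spread over the three edges, a further sequence of adjacent swaps (as in step (ii)) arranges them into the slots $q_1, \dots, q_n$ on $e_1$ in any desired order, completing the proof.
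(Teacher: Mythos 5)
The paper states this lemma without proof, so there is no official argument to match yours against line by line; the closest thing in the text is the motion planning algorithm of \S\ref{algorithm}, which implements your step (i) for trees by always sweeping in the \emph{minimal} particles first (with respect to the partial order based at a root), so that a particle being moved never meets an obstacle on its way to the parking edge. Your overall two-step strategy --- park every particle near an essential vertex, then realize arbitrary permutations of the parked particles by adjacent transpositions --- is the right one and is essentially the strategy underlying the paper's algorithm.

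However, your transposition maneuver in step (ii) fails as written. You park $A$ (the particle at $q_i$) on $e_2$, slide $B$ from $q_{i+1}$ into the slot $q_i$, and then bring $A$ back ``through $v$ down along $e_1$ into slot $q_{i+1}$''; but on its way from $v$ to $q_{i+1}$ the particle $A$ must pass through the point $q_i$, which is now occupied by $B$ --- a collision that no shifting of the outer particles $q_{i+2},\dots,q_n$ can repair. As described, the maneuver uses only the two edges $e_1,e_2$ at $v$, i.e.\ locally a vertex of degree two, and particles on an interval cannot be permuted; this is precisely the point where the hypothesis $\eta(v)\ge 3$ is indispensable. The correct exchange uses the third edge: send $A$ onto $e_2$, send $B$ through $v$ onto $e_3$, bring $A$ back through $v$ past the now-vacant slot $q_i$ to $q_{i+1}$, and finally return $B$ to $q_i$. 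A smaller but related gap sits in step (i): ``detour the obstacle $z_j$ \dots using $v$ as a traffic hub'' presumes that $z_j$ can itself reach $v$, which may be blocked by further particles, so the argument as stated is circular. You need an ordering that breaks the circularity --- for instance the paper's device of moving particles in order of minimality with respect to a root, which guarantees each swept particle an unobstructed route to the parking zone (for a general graph, work with a spanning tree containing the three chosen edges at $v$). With the corrected Y-exchange and such an ordering, your proof goes through.
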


An {\itshape essential vertex}\index{essential vertex} is a vertex of the graph
which is incident to at least 3 edges. We denote the number of essential vertexes of $\Gamma$ by $m(\Gamma)$. 

\subsection{Motion Planning Algorithm in $F(\Gamma,n)$}\label{algorithm} The algorithm presented here was first described in \cite{F3}. 
We assume below that $\Gamma$ is a tree having an essential vertex.
%
Fix a univalent vertex $u_0\in \Gamma$ which will be
called {\it the root}. Any point in $\Gamma$ can be connected by a simple path to the root
$u_0$ and this connecting path is unique up to homotopy. The choice of the root
determines a partial order on $\Gamma$: we say that $x\succeq y$, where $x, y\in \Gamma$
if any path from $x$ to the root $u_0$ passes through $y$. Of course, $\succeq$ is only a
partial order, i.e. there may exist pairs $x, y\in \Gamma$ such that neither $x\succeq
y$, nor $y\succeq x$. On the following picture we see $u\succeq v$ and $w\succeq v$
however $u$ and $w$ are not comparable.
\begin{figure}[h]
\begin{center}
\resizebox{4cm}{5cm}{\includegraphics
{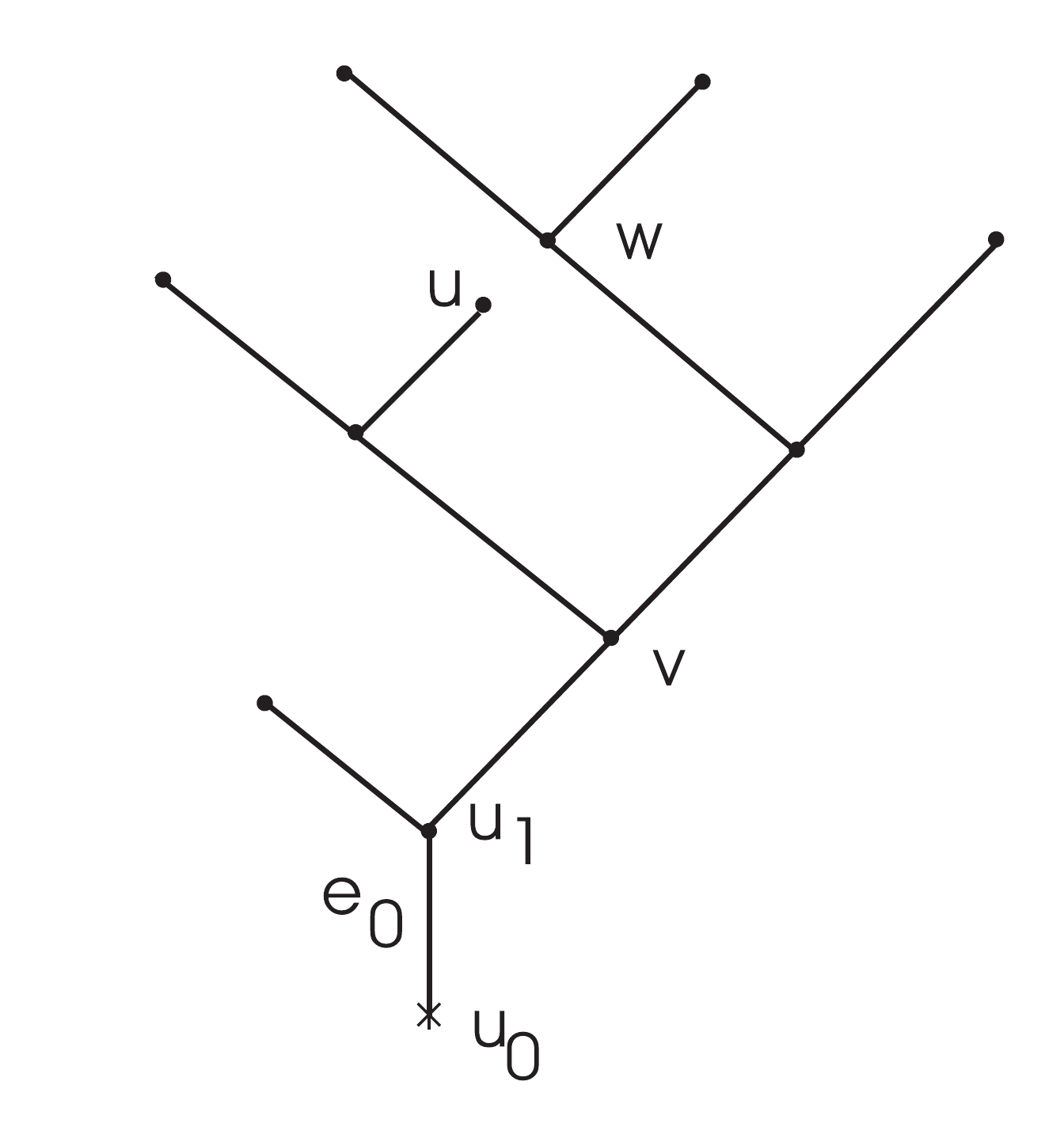}}
\end{center}
\caption{A partial order between the points of the tree.}\label{tree}
\end{figure}

Let $e_0\subset T$ denote the root edge of $\Gamma$. 
For a configuration $C=(z_1, \dots, z_n)\in F(e_0, n)\subset F(\Gamma, n)$ such that $z_i\in e_0$ for all $i=1, \dots, n$ one has 
\begin{eqnarray}\label{order1}
z_{i_1}\preceq z_{i_2} \preceq \dots\preceq z_{i_n}\end{eqnarray}
for some permutation $\tau =(i_1, i_2, \dots, i_n)\in \Sigma_n$. The space $F(e_0, n)$ consists of $n!$ connected components 
labeled by permutations $\tau\in \Sigma_n$, 
$$F(e_0, n) \, =\  \large\bigsqcup_{\tau\in \Sigma_n} F(e_0, n,\tau)$$
 where $F(e_0, n, \tau)$ is the set  of all configurations 
$C=(z_1, \dots, z_n)\in F(e_0, n)$ with the oder of the points $z_j$ described by the permutation $\tau$ as in (\ref{order1}). The fact that each 
space $F(e_0, n,\tau)$ is contractible follows similarly to the arguments of section \ref{secfl}. Using the connectivity of $F(\Gamma, n)$ and the contractibility of 
$F(e_0, n, \tau)$ we may construct a continuous section 
\begin{eqnarray}\label{sigma0}
\sigma_0: F(e_0, n)\times F(e_0, n)\to F(\Gamma, n)^I
\end{eqnarray}
of the path fibration 
$$\pi: F(\Gamma, n)^I \, \to \, F(\Gamma, n)\times F(\Gamma, n)$$
which is similar to (\ref{sigmal}). The section $\sigma_0$ is a continuous motion planning algorithm moving any configuration of $n$ points lying on the root edge $e_0$ to any other such configuration avoiding collisions. Note that under this motion some points will have to leave the root edge before returning to it.


\subsection{} Our algorithm works as follows. Let 
$$A=(A_1, \dots, A_n)\in F(\Gamma, n) \quad \mbox{and}\quad
B=(B_1, \dots, B_n)\in F(\Gamma, n)$$ be two given configurations of $n$ distinct
points on $\Gamma$. Let $A_{i_1}, \dots, A_{i_r}$ be all the minimal elements (with
respect to the order $\succeq$) of the set of points of $A$. Here we assume
that the indices satisfy $i_1<i_2< \dots<i_r$. First we move the point $A_{i_1}$ down to an interior point of
the root edge $e_0$. Next we move $A_{i_2}$ to the root edge $e_0$ and we continue moving
similarly the remaining points $A_{i_3}, \dots, A_{i_r}$ in order of their indices. 
As the result, after this first stage of the algorithm, all the minimal points of $A$ are transferred into the root edge $e_0$. 
On the second stage we find the minimal set
among the remaining points of $A$ and move them down, one after another, to the
edge $e_0$. Iterating this procedure we find a continuous collision free motion of all the
points of $A$ moving them onto the interior of the root edge $e_0$. We obtain a configuration of points $A'=(A_1', \dots, A'_n)\in F(e_0, n)$ 
which all lie
in the interior of the root edge $e_0$, in a certain order.



Applying a similar procedure to the configuration $B$ we obtain a configuration $B'=(B'_1, \dots,
B'_n)\in F(e_0, n)\subset F(\Gamma, n)$ connected with $B$ by a continuous collision free motion. 

Next we apply the section $\sigma_0$ giving a continuous collision free motion from 
$A'$ to $B'$.

Finally, the output of the algorithm is the concatenation of (1) the motion
from $A$ to $A'$; (2) the motion from $A'$ to $B'$ fiven by $\sigma_0$; (3) 
the reverse 
motion from $B$ to $B'$.

\subsection{} The above algorithm has discontinuities: if one of
the points $A_j$ is a vertex $v\in T$ then a small perturbation of $A_j$ inside
$\Gamma$ may lead to a different set of minimal points (see Figure \ref{triple}) and
hence to a completely different ultimate motion. Note that the
vertices of $\Gamma$ which have valence one or two do not cause
discontinuity, i.e. we only need to worry about the essential vertexes of $\Gamma$. 
\begin{figure}[h]
\begin{center}
\resizebox{2cm}{2.5cm}{\includegraphics
{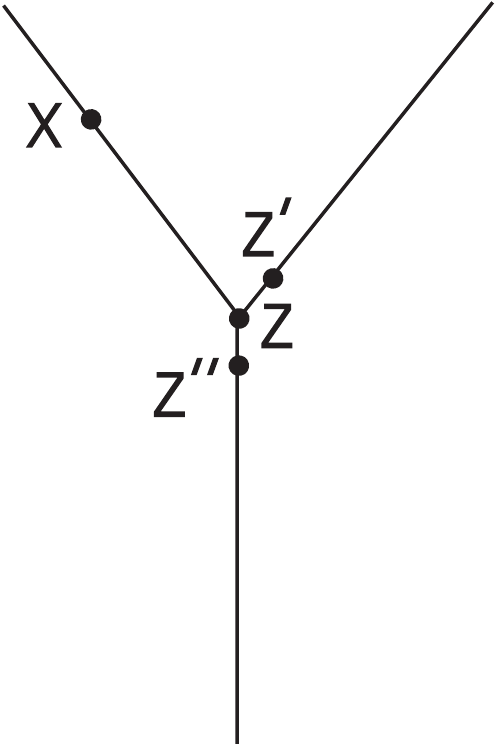}}
\end{center}
\caption{Perturbations $z'$ and $z^{\prime\prime}$ of the vertex point $z$ lead
to different sets of minimal points of the set $\{x, z\}$} \label{triple}
\end{figure}


\subsection{} Let $S_i\subset F(\Gamma, n)$ denote the set of all configurations $A=(A_1,
\dots, A_n)$ such that precisely $i$ points among the points $A_j$ are essential vertices
of $\Gamma$. If we restrict the above algorithm to the set of pairs $(A,B)\in
S_i\times S_j$ with fixed $i, j$, then the result of the algorithm is a continuous function of the input; in other words we have a continuous section 
\begin{eqnarray}
\sigma_{ij}: S_i\times S_j \to F(\Gamma, n)^I, \quad\mbox{where}\quad i, j= 0, 1, \dots, m(\Gamma).
\end{eqnarray}
Recall that $m(\Gamma)$ denotes the number of essential vertices of $\Gamma$. 

\subsection{} We observe that the closure of $S_i$ satisfies
\begin{eqnarray}
\overline S_i \subset \, \bigcup_{j\ge i}S_j.
\end{eqnarray}
It follows that for two distinct pairs $(i,j)$ and $(i', j')$ with $i+j=i'+j'$ one has 
$$\overline{S_i\times S_j}\cap (S_{i'}\times S_{j'}) =\emptyset.$$ 
Hence we obtain that the continuous sections $\sigma_{ij}$ constructed above define a continuous section of the path fibration 
over each set 
$$W_k = \bigcup_{i+j=k} S_i\times S_j, \quad k = 0, 1, \dots, 2m(\Gamma).$$
The sets $W_0, W_1, \dots, W_{2m(\Gamma)}$ form a partition of $F(\Gamma, n)\times F(\Gamma, n)$ and each of these sets is an ENR. Hence we have described a tame motion planning algorithm on $F(\Gamma, n)$ with $2m(\Gamma)+1$ regions of continuity. 

%
%
%
\begin{corollary}\label{cor2}
Let $\Gamma$ be a tree having an essential vertex. Then the topological
complexity of the configuration space $F(\Gamma, n)$ satisfies
\begin{eqnarray}\label{leq}
\tc(F(\Gamma, n)) \leq 2m(\Gamma)+1.
\end{eqnarray}
\end{corollary}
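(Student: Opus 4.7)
The plan is to observe that the corollary is essentially an immediate consequence of the construction detailed in the preceding subsections of \S\ref{sec10}, so the bulk of the work lies in verifying that the pieces fit together into a genuine tame motion planning algorithm. I would begin by recalling the algorithm from \S\ref{algorithm}, which, given any ordered pair of configurations $(A,B) \in F(\Gamma,n)\times F(\Gamma,n)$, produces a continuous collision-free path from $A$ to $B$ by (i) iteratively sliding the minimal points of $A$ down to the root edge $e_0$, (ii) applying the section $\sigma_0$ of (\ref{sigma0}) to move the resulting configuration in $F(e_0,n)$ to the analogous configuration obtained from $B$, and (iii) reversing the analogue of step (i) for $B$.

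Next, I would record the key continuity observation: the first and third stages are continuous in the input \emph{provided} the set of essential vertices occupied by the input configuration does not change. This is what forces the decomposition $F(\Gamma,n) = \bigsqcup_i S_i$, where $S_i$ is the locus of configurations occupying exactly $i$ essential vertices of $\Gamma$. Restricted to $S_i \times S_j$, the algorithm delivers a continuous section $\sigma_{ij}\colon S_i\times S_j \to F(\Gamma,n)^I$ of the path fibration. Since there are $m(\Gamma)+1$ possible values of $i$ (namely $0, 1, \dots, m(\Gamma)$), this produces $(m(\Gamma)+1)^2$ mutually disjoint ENR pieces covering $F(\Gamma,n)\times F(\Gamma,n)$, each carrying a continuous section.

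The main step is the repackaging that reduces the count from $(m(\Gamma)+1)^2$ down to $2m(\Gamma)+1$. Here the decisive input is the closure relation $\overline{S_i} \subset \bigcup_{j \ge i} S_j$, which expresses the fact that a perturbation can only move a point \emph{off} an essential vertex, never onto a new one. Grouping by the value $k = i+j$, this implies that for distinct pairs $(i,j)\neq (i',j')$ with $i+j = i'+j' = k$, no limit point of $S_i\times S_j$ can lie in $S_{i'}\times S_{j'}$. Consequently, the sections $\sigma_{ij}$ with $i+j = k$ glue into a single continuous section over
\[
W_k \;=\; \bigcup_{i+j=k} S_i \times S_j, \qquad k = 0, 1, \dots, 2m(\Gamma).
\]
The sets $W_0, \dots, W_{2m(\Gamma)}$ are each ENRs, they are pairwise disjoint, and their union is $F(\Gamma,n)\times F(\Gamma,n)$.

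Finally, I would conclude by invoking the result recalled at the end of \S\ref{sec7} (citing \cite{F4}, Theorem 13.1) that for an ENR the minimal number of domains of continuity in a tame motion planning algorithm coincides with $\tc$. Since we have exhibited a tame motion planning algorithm with $2m(\Gamma)+1$ domains of continuity, inequality (\ref{leq}) follows. The only step that requires genuine care is the gluing argument via the closure relation for $S_i$; the rest is bookkeeping on top of the algorithm built in \S\ref{algorithm}.
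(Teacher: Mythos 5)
Your proposal is correct and follows essentially the same route as the paper: the decomposition of $F(\Gamma,n)$ into the strata $S_i$ according to the number of occupied essential vertices, the closure relation $\overline{S_i}\subset\bigcup_{j\ge i}S_j$ used to glue the sections $\sigma_{ij}$ with $i+j=k$ into a single continuous section over $W_k$, and the appeal to the equality of $\tc$ with the minimal number of domains of continuity of a tame motion planning algorithm on an ENR. Nothing is missing.
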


Our goal in the following sections will be to prove the following result:
\begin{theorem}\label{tctree}
Let $\Gamma$ be a tree not homeomorphic to the interval $[0,1]$ and let $n$ be an integer satisfying $n\ge 2m(\Gamma)$; in the case when 
$n=2$ we shall additionally assume that $\Gamma$ is not homeomorphic to the letter Y. Then
\begin{eqnarray}\label{equals}
\tc(F(\Gamma, n)) = 2m(\Gamma)+1.
\end{eqnarray}
\end{theorem}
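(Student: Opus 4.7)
My strategy is to combine the upper bound of Corollary \ref{cor2}, namely $\tc(F(\Gamma,n)) \le 2m(\Gamma)+1$, with a matching lower bound obtained via Theorem \ref{thm4}. Concretely, I would exhibit $2m(\Gamma)$ zero-divisors in $H^*(F(\Gamma,n);\Q) \otimes H^*(F(\Gamma,n);\Q)$ whose cup product is nonzero, which would immediately force $\tc(F(\Gamma,n)) \ge 2m(\Gamma)+1$.

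The raw material for these zero-divisors should come from the top-dimensional cohomology analysis of \S \ref{sec12}. The picture I would establish is: $H^*(F(\Gamma,n);\Q)$ vanishes in degrees above $m(\Gamma)$, and for each essential vertex $v \in \Gamma$ one has two distinguished one-dimensional cohomology classes $\omega_v, \omega_v' \in H^1(F(\Gamma,n);\Q)$. These classes should arise naturally from the inclusion $F(Y_v,2) \hookrightarrow F(\Gamma,n)$ obtained by parking $n-2$ particles far from a $Y$-shaped neighborhood $Y_v$ of $v$; the explicit identification of $F(Y_v,2)$ furnished by Theorem \ref{thm1} is meant to supply two independent $1$-cycles circling the essential point. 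The hypothesis $n \ge 2m(\Gamma)$ guarantees that one can simultaneously place two such dedicated particles at every essential vertex; the exceptional case $(\Gamma,n)=(Y,2)$ is ruled out because a single pair of particles on $Y$ cannot generate enough independent $1$-classes to feed the argument.

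With $\omega_v, \omega_v'$ in hand, I would form the $2m(\Gamma)$ zero-divisors $\bar\omega_v = 1 \otimes \omega_v - \omega_v \otimes 1$ and $\bar\omega_v' = 1 \otimes \omega_v' - \omega_v' \otimes 1$ and compute the product $P = \prod_v \bar\omega_v\bar\omega_v'$ in $H^{2m(\Gamma)}(F(\Gamma,n)\times F(\Gamma,n);\Q)$. Because $H^i(F(\Gamma,n);\Q) = 0$ for $i > m(\Gamma)$, the K\"unneth decomposition of $P$ collapses to the single bidegree $(m(\Gamma),m(\Gamma))$, drastically limiting the terms one needs to track. If one further verifies that $\omega_v \cdot \omega_v' = 0$ in $H^2(F(\Gamma,n);\Q)$ for each $v$ (expected, since both classes are supported near a single vertex where only $1$-dimensional topology is visible), then each factor simplifies, using the sign rule (\ref{signs}), to $\bar\omega_v\bar\omega_v' = \omega_v' \otimes \omega_v - \omega_v \otimes \omega_v'$. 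The product $P$ then expands into $2^{m(\Gamma)}$ tensors of the form $\pm \alpha_T \otimes \beta_T$ indexed by subsets $T$ of essential vertices, where $\alpha_T, \beta_T \in H^{m(\Gamma)}(F(\Gamma,n);\Q)$ are specific top-degree monomials in the $\omega_v$'s and $\omega_v'$'s.

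The hard part will be the algebraic input: showing that $H^*(F(\Gamma,n);\Q)$ has cohomological dimension exactly $m(\Gamma)$, that the local classes $\omega_v, \omega_v'$ exist with the claimed cup-product behavior, and that the monomials $\{\alpha_T\}_T$ are linearly independent enough in $H^{m(\Gamma)}(F(\Gamma,n);\Q)$ to ensure $P \neq 0$. This is precisely the content the paper advertises in \S \ref{sec12}, leaning on the explicit description of $F(\Gamma,2)$ proved as Theorem \ref{thm1}. Once this top-cohomology data is in place, Theorem \ref{thm4} yields $\tc(F(\Gamma,n)) \ge 2m(\Gamma)+1$, which together with Corollary \ref{cor2} establishes the claimed equality.
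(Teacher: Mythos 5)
Your overall strategy (upper bound from Corollary \ref{cor2}, lower bound from $2m$ zero-divisors via Theorem \ref{thm4}) is the paper's strategy, but the specific source of your $2m$ degree-one classes does not work, and this is exactly the point where the paper has to be cleverer.

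You propose to extract, for each essential vertex $v$, \emph{two} independent classes $\omega_v,\omega_v'\in H^1(F(\Gamma,n);\Q)$ from the single dedicated pair of particles parked in a $Y$-shaped neighbourhood $Y_v$ of $v$. But Theorem \ref{thm1} shows that $Q_{Y}$ for the letter $Y$ has two vertices and $(3-1)(3-2)=2$ edges, i.e.\ $F(Y_v,2)\simeq S^1$, so $H^1(F(Y_v,2);\Q)$ has rank one. For a tree whose essential vertices all have degree $3$ (a binary tree, say), any two classes supported on the same pair of particles near the same vertex are therefore proportional, $\omega_v'=c\,\omega_v$. Your own simplification $\bar\omega_v\bar\omega_v'=\omega_v'\otimes\omega_v-\omega_v\otimes\omega_v'$ then gives $c(\omega_v\otimes\omega_v-\omega_v\otimes\omega_v)=0$, so the entire product $P$ vanishes identically and no lower bound is obtained. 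Getting two genuinely independent classes per vertex from dedicated pairs would require $4m$ particles, not the hypothesised $2m$.

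The paper circumvents this by decoupling the particle pairs from the vertices: with projections $\Phi_i(x_1,\dots,x_n)=(x_{2i-1},x_{2i})$ and vertex classes $\alpha_j\in H^1(F(\Gamma,2))$ it forms $u_{ij}=\Phi_i^\ast(\alpha_j)$, and takes the $2m$ classes $u_{11},\dots,u_{mm}$ together with the cyclically shifted $u_{12},u_{23},\dots,u_{m1}$; each pair $i$ thus contributes two classes attached to two \emph{different} vertices $v_i$ and $v_{i+1}$. Nonvanishing of the product of zero-divisors is then proved not by a linear-independence count of monomials, as you suggest, but by evaluating against the explicit homology class $z\otimes z^\tau$, where $z$ is carried by the torus $T^m$ and $\tau=(2,3,\dots,m,1)$; the orthogonality relations $\langle u_\sigma,z^\tau\rangle\neq 0$ iff $\sigma=\tau$ kill every term except $S=\{1,\dots,m\}$. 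Finally, note that this construction itself degenerates when $m=1$ (there is no cyclic shift), and the paper treats that case separately: the exclusion of the letter $Y$ forces $\eta(v)\ge 4$, so $b_1(F(\Gamma,2))\ge 2$ and Example \ref{exgraph} gives $\tc=3$, with Lemma \ref{dom} and Theorem \ref{domination} handling $n>2$. Your proposal should acknowledge and supply this separate case as well.
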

In other words the upper bound of Corollary \ref{cor2} is exact assuming that $n\ge 2m(\Gamma)$ and
hence the motion planning algorithm described above in this section is optimal. 
There is however one exception: 
if $\Gamma$ is homeomorphic to the letter $Y$ then 
$F(\Gamma,
2)$ is homotopy equivalent to the circle $S^1$ as follows from Theorem \ref{thm1} below. Hence in this case
$\tc(F(\Gamma,2))=2$, see Example \ref{ex22}; the inequality (\ref{leq}) is strict in this
case.

Theorem \ref{tctree} was stated in \cite{F3} without proof. A similar (but slightly different) theorem appears also in a recent preprint \cite{SS}. 

%

\section{The space $F(\Gamma, 2)$ for a tree $\Gamma$}\label{secfg2}

In this section (which can be read independently of the rest of the paper) we describe the $\Z_2$-equivariant homotopy type of the configuration space $F(\Gamma, 2)$ of two distinct particles of a tree $\Gamma$. The involution 
$$\tau: F(\Gamma, 2) \to F(\Gamma, 2)$$ acts by permutting the particles, i.e. $\tau(x, y) =(y, x)$ where $(x, y)\in F(\Gamma, 2)$. 

Recall that {\it the degree} of a vertex $v$  (denoted by $\eta(v)$) is the number of edges of $\Gamma$ incident to $v$. A vertex $v$ is {\it essential} if $\eta(v)\ge 3$. 
Fix a
univalent {\it root vertex} $u_0\in \Gamma$, $\eta(u_0)=1$. Then any vertex $v\not=u_0$ has a
unique {\itshape descending} edge $e$ incident to it; the minimal path connecting $v$ to the root vertex passes through $e$. 
The other $\eta(v) -1$ edges incident to $v$ will be called
{\itshape ascending}.
\begin{figure}[h]
\begin{center}
\resizebox{10cm}{4.5cm}{\includegraphics{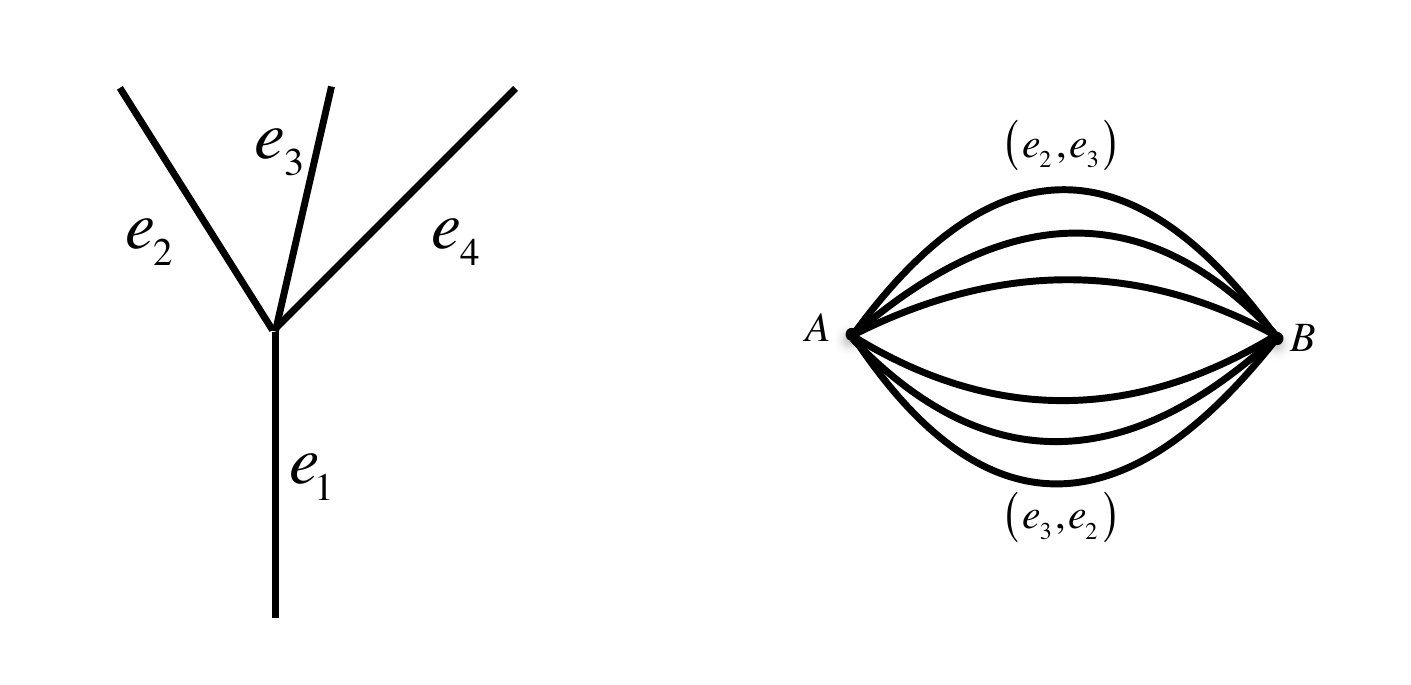}}
\end{center}
\caption{An essential vertex $v$, the descending edge $e_1$ and the ascending edges $e_2, e_3, e_4$ (left). The graph $Q_\Gamma$ (right).} \label{qgamma}
\end{figure}
We associate with a tree $\Gamma$ a
1-dimensional cell complex $Q_\Gamma$ which is constructed as follows. 
The complex $Q_\Gamma$ has two 
$0$-cells
(vertices) $A$ and $B$ and an even number 
$$\sum_{v}(\eta(v)-1)(\eta(v)-2)$$
of
1-dimensional cells connecting $A$ to $B$, each labelled by a triple $(v, e, e')$ where $v$ is an essential vertex of $\Gamma$ and $e, e'$ is an ordered pair of distinct
ascending edges of $\Gamma$ incident to $v$. 
The complex $Q_\Gamma$ has a free 
involution $T:Q_\Gamma\to Q_\Gamma$ which maps $A$ to $B$ and maps homeomorphically each edge with
the label $(v, e, e')$ onto the edge with the label $(v, e',e)$. See Figure \ref{qgamma}.

\begin{theorem}\label{thm1} For a tree $\Gamma$ having an essential vertex
the configuration space $F(\Gamma, 2)$ is ${\mathbf Z}_2$-equivariantly homotopy equivalent
to the complex $Q_\Gamma$.
\end{theorem}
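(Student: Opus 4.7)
The plan is to construct an explicit $\mathbf{Z}_2$-equivariant deformation retraction of $F(\Gamma, 2)$ onto a one-dimensional subspace $K \subset F(\Gamma, 2)$ canonically identified with $Q_\Gamma$. The proof falls into three stages: stratifying $F(\Gamma, 2)$ by order type, choosing a spine inside each closed stratum, and gluing the stratum-wise retractions into a global equivariant map.

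Using the partial order $\preceq$ induced by the root $u_0$, I partition
\[
F(\Gamma, 2) \;=\; \mathcal{C}^+ \,\sqcup\, \mathcal{C}^- \,\sqcup\, \bigsqcup_{(v, e, e')} \mathcal{I}(v, e, e'),
\]
with $\mathcal{C}^+ = \{(x,y) : x \prec y\}$, $\mathcal{C}^- = \tau(\mathcal{C}^+)$, and $\mathcal{I}(v, e, e')$ consisting of incomparable pairs whose meet is the essential vertex $v$, with $x$ in the open subtree above $v$ through the ascending edge $e$ and $y$ in the open subtree through $e'$. Because $u_0$ is univalent, each $\mathcal{C}^\pm$ retracts via $(x, y) \mapsto (u_0, y)$ onto $\Gamma \setminus \{u_0\}$, which is contractible; each $\mathcal{I}(v, e, e')$ is a product of two contractible open subtrees. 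The involution $\tau$ interchanges $\mathcal{C}^+ \leftrightarrow \mathcal{C}^-$ and sends $\mathcal{I}(v, e, e') \to \mathcal{I}(v, e', e)$, matching the action of $T$ on $Q_\Gamma$.

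Next I pick basepoints $c_A \in \mathcal{C}^+$, $c_B := \tau(c_A) \in \mathcal{C}^-$, and for each $(v, e, e')$ a core arc $\gamma_{v, e, e'} \subset \overline{\mathcal{I}(v, e, e')}$ joining $c_A$ to $c_B$. Such an arc is built by prescribing an explicit motion \emph{around} $v$: starting from $c_A$, slide the first coordinate up into the branch of $e$ above $v$, then slide the second coordinate down through the branch of $e'$ past $v$ into the descending edge, terminating at $c_B$. Chosen symmetrically so that $\tau(\gamma_{v, e, e'}) = \gamma_{v, e', e}$, the union $K := \{c_A, c_B\} \cup \bigcup_{(v, e, e')} \gamma_{v, e, e'}$ is $\tau$-equivariantly homeomorphic to $Q_\Gamma$ by construction. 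The stratum-wise deformation retractions (straight-line contraction onto $c_A$ inside $\mathcal{C}^+$, onto $\gamma_{v, e, e'}$ inside $\mathcal{I}(v, e, e')$ by projecting each coordinate onto a chosen spine in its subtree and then sliding along $\gamma_{v, e, e'}$) are then to be assembled into a single equivariant homotopy $H \colon F(\Gamma, 2) \times I \to F(\Gamma, 2)$ with $H_1$ landing in $K$.

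The main obstacle is the continuity of this global retraction across the closures of the strata: the adjacency loci occur precisely when one coordinate sits at an essential vertex $v$, and at such points several $\mathcal{I}(v, e, e')$-strata abut $\mathcal{C}^\pm$. I would resolve this by introducing small $\tau$-invariant collar neighbourhoods of each essential vertex and interpolating between stratum retractions via a partition of unity in the collar parameter, taking care that the limiting behaviour at $v$ is compatible with the attaching data of $K$ to $\{c_A, c_B\}$. A technically cleaner alternative is to pass to the Abrams discretized cubical model $D(\Gamma', 2)$ of a sufficiently subdivided $\Gamma'$, which is a $\tau$-equivariant deformation retract of $F(\Gamma, 2)$, and to apply an equivariant discrete Morse matching whose critical cells are exactly the two $0$-cells $c_A, c_B$ and the $1$-cells $\gamma_{v, e, e'}$ of $Q_\Gamma$; that route bypasses the pointwise interface bookkeeping and reduces the theorem to a finite combinatorial check.
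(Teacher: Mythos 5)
Your stratification is the right combinatorial picture and the answer you aim at is correct, but the proof as written has a genuine gap exactly where you flag it: the assembly of the stratum-wise retractions into a single continuous map. The pieces $\mathcal{C}^\pm$ and $\mathcal{I}(v,e,e')$ form a partition, not an open cover, and their closures interlock along the locus where a coordinate sits at an essential vertex; a map defined separately on each stratum has no reason to be continuous there. Saying you would "interpolate via a partition of unity in the collar parameter" names the difficulty rather than resolving it -- one needs an actual NDR/mapping-cylinder structure near the interfaces, and verifying that the limits of the $\mathcal{I}(v,e,e')$-retractions agree with the $\mathcal{C}^\pm$-retractions is the entire technical content of this route. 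There is also an internal inconsistency in the construction of $K$: a single pair of basepoints $c_A, c_B$ cannot lie in $\overline{\mathcal{I}(v,e,e')}$ for every triple simultaneously (the upper subtrees of distinct essential vertices are disjoint), so the arcs $\gamma_{v,e,e'}$ cannot be contained in the closed strata as claimed; they must traverse $\mathcal{C}^\pm$, and then "retract $\mathcal{I}(v,e,e')$ onto $\gamma_{v,e,e'}$" does not typecheck as a deformation retraction of that stratum. Your fallback via Abrams' discretized model and an equivariant discrete Morse matching is a legitimate known strategy, but asserting the existence of a $\tau$-equivariant acyclic matching with precisely the cells of $Q_\Gamma$ critical, and identifying the attaching maps afterwards, is again the whole proof deferred to a "finite combinatorial check" that is not exhibited.

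The paper avoids all of this by working with an honest \emph{open} cover $F(\Gamma,2)=U\cup V$, where $U$ (resp.\ $V$) consists of configurations $(x,y)$ whose first (resp.\ second) coordinate can reach the root without crossing the other; $U$ and $V$ are contractible, are swapped by $\tau$, and $U\cap V$ is the disjoint union of the contractible sets $W_{v,e,e'}$. One then forms the double mapping cylinder $Q$ of $U\hookleftarrow U\cap V\hookrightarrow V$ and produces a $\Z_2$-equivariant section $F(\Gamma,2)\to Q$ from a \emph{symmetrized} partition of unity $\psi_U(x,y)=\psi_V(y,x)$; continuity is automatic because the cover is open, and collapsing the contractible ends gives the suspension of the label set, i.e.\ $Q_\Gamma$. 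If you want to salvage your approach, either adopt that open-cover/homotopy-pushout argument, or commit fully to the discretized cubical model and actually write down and verify the equivariant Morse matching.
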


This theorem was stated in \cite{F3} without proof. The configuration spaces $F(\Gamma, 2)$ for various classes of graphs $\Gamma$ complementing the class of trees were explicitly described in \cite{BF}, \cite{FH}. 


\begin{proof}[Proof of Theorem \ref{thm1}]
We repeat the standard arguments (compare Proposition 4G.2 from \cite{Hat02}) emphasising the equivariant features we are dealing with. 

First we describe an open cover 
$F(\Gamma, 2) \, =\, U\cup V.$
Since $\Gamma$ is a tree, for any two points $x, x'\in \Gamma$ there exists a unique simple path in $\Gamma$ connecting $x$ to $x'$. Fix an interior point $u'_0$ of the edge incident to the root vertex $u_0$.
Denote by 
$U$ the set of all configurations $(x, y)\in F(\Gamma, 2)$ such that the simple path connecting $x$ to $u_0$ does not pass through $y$. 
Similarly, we denote by $V\subset 
F(\Gamma, 2)$ the set of all configurations $(x, y)$ such that the simple path connecting $y$ to the root $u_0$ does not pass through $x$. It is obvious that $U$ and $V$ are open and cover $F(\Gamma, 2)$. 

The set $U$ is contractible. Indeed, if $(x, y)\in U$ then we may move the configuration $(x, y)$ continuously to the configuration $(u_0, u'_0)$ by first moving $x$ along the minimal path to $u_0$ and then moving $y$  along the minimal path to $u'_0$. We obtain a path $(x(t), y(t))\in F(\Gamma, 2)$ (where $t\in [0,1]$) 
with $(x(0), y(0))=(x, y)$ and $(x(1), y(1))=(u_0, u'_0)$ which is not only continuous as function of $t$, but it is also  depends continuously on the initial pair 
$(x, y)$. Therefore we obtain a continuous deformation retraction of the set $U$ to the point $(u_0, u'_0)$. 

Similarly, the set $V$ is contractible. 

A configuration $(x, y)\in F(\Gamma, 2)$ lies in the intersection $U\cap V$ if the minimal path connecting $x$ to $u_0$ does not pass through $y$ and the minimal path connecting $y$ to $u_0$ does not pass through $x$. Initially, these two minimal paths have distinct routes before they meet at an essential vertex $v$ and then they coincide and follow the minimal path connecting $v$ 
to $u_0$ (see Figure \ref{wee}). 
\begin{figure}[h]
\begin{center}
\resizebox{6cm}{6cm}{\includegraphics{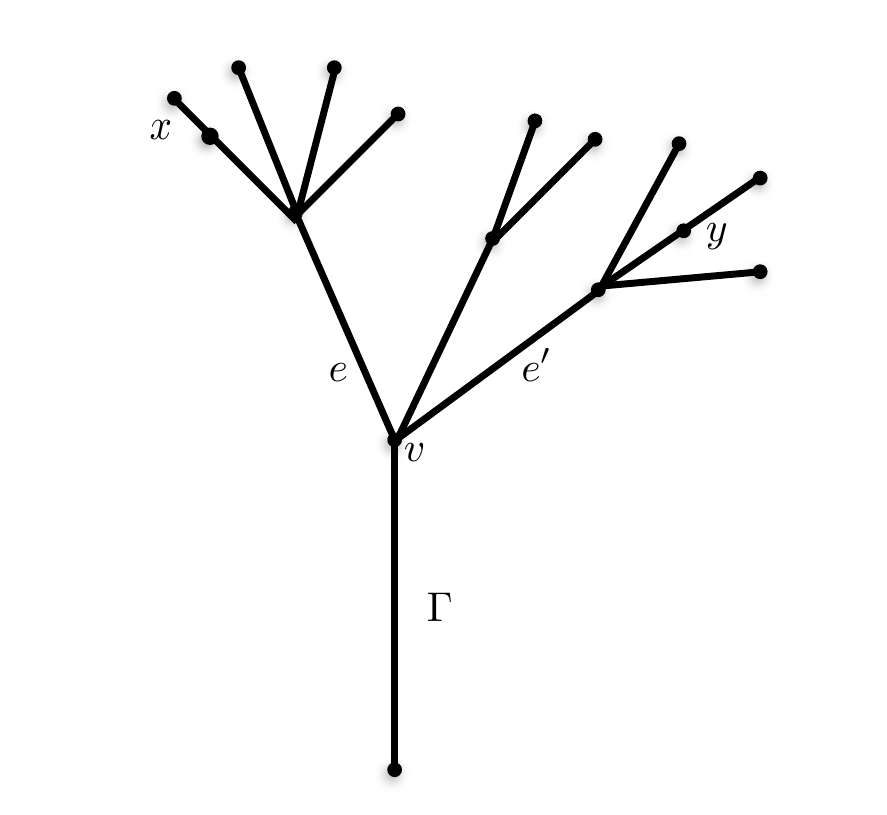}}
\end{center}
\caption{A configuration $(x,y)$ lying in the set $W_{v, e, e'}$.} \label{wee}
\end{figure}
We see that the intersection $U\cap V$ has many connected components which can be labelled by triples $(v, e, e')$ where 
$v$ is an essential vertex of $\Gamma$ and $e, e'$ is an ordered pair of ascending edges incident to $v$. We denote by $W_{v, e,e'}$ the set of configurations $(x, y)\in F(\Gamma, 2)$ such that the minimal path connecting $x$ to the root $u_0$ contains an internal point of $e$ and the minimal path connecting $y$ to the root $u_0$ contains an internal point of $e'$. The sets $W_{v,e, e'}$ corresponding to different triples $v, e, e'$ are disjoint and 
$$U\cap V \, =\, \bigsqcup_{v,e,e'} \, W_{v,e, e'}.$$
Each set $W_{v,e, e'}$ is contractible since one may continuously move any configuration $(x, y)\in W_{v,e, e'}$ into 
a fixed configuration $(x_0, y_0)$, where $x_0\in e$ and $y_0\in e'$, by moving $x$ and $y$ along the minimal paths 
connecting these points to the root $u_0$; this motion is continuous both as a function of time $t$ and as a function of the initial conditions $(x, y)$. 

The involution $\tau: F(\Gamma, 2)\to F(\Gamma, 2)$, where $\tau(x, y)=(y, x)$, maps $U$ onto $V$ and vice versa. Besides, $\tau$ maps each set $W_{v, e, e'}$ homeomorphically onto the set $W_{v, e', e}$. 

Consider the quotient $Q$ of the disjoint union 
$$U\sqcup V \sqcup \left((U\cap V)\times [0,1]\right)$$
where for each configuration $(x, y)\in U\cap V$ we identify the point $((x, y), 0)$ with $(x, y)\in U$ and the point $((x, y), 1)$ with $(x, y)\in V$. The quotient $Q$ carries a natural involution 
$$\tau[(x, y), t]= [(y, x), 1-t],$$
where the square brackets denote the equivalence class in $Q$.
The projection map 
$P: Q\to F(\Gamma, 2)$ is $\Z_2$-equivariant. Here for $(x, y)\in U\cap V$ one has $P[(x, y),t]=(x, y)$; similarly, for 
$(x, y)\in U$ or $(x, y)\in V$ one has $P(x, y)=(x, y)$. 

Next we show that there exists a continuous $\Z_2$-equivariant section $S: F(\Gamma, 2)\to Q$. 
Indeed, fix a partition of unity subordinate to the cover 
$U, V$; it is a pair of continuous functions $\phi_U, \phi_V: F(\Gamma, 2) \to [0,1]$ such that 
${\supp}\, (\phi_U)\subset U$, ${\supp}\, (\phi_V)\subset V$ and for each configuration $(x, y)\in F(\Gamma, 2)$ one has 
$$\phi_U(x, y) +\phi_V(x, y) =1.$$
Define the functions
$\psi_U, \psi_V: F(\Gamma, 2) \to [0,1]$ by
$$\psi_U(x, y) = \frac{1}{2}\cdot\left(\phi_U(x, y) + \phi_V(y,x)\right),$$
$$\psi_V(x, y) = \frac{1}{2}\cdot\left(\phi_V(x, y) + \phi_U(y,x)\right).$$
This is a partition of unity satisfying the additional property $\psi_U(x, y) = \psi_V(y, x)$. 
We may define the section $S: F(\Gamma, 2)\to Q$ by setting 
$$S(x, y) \, =\, \left[(x,y), \psi_U(x, y)\right].$$ 
We have 
$S(y,x)=\left[(y,x), 1-\psi_U(x, y)\right],$ i.e. $S$ is $\Z_2$-equivariant. Clearly, $P\circ S$ is the identity map. Besides, the homotopy 
$\Phi_\tau: Q\to Q$ given by 
$$\Phi_\tau \left[(x, y), t\right] = 
\left[(x, y), (1-\tau)\psi_U(x, y) + \tau t\right]$$
connects the identity map and the composition $S\circ P$. This shows that $P$ is an equivariant deformation retraction. 

Let $Q'$ denote the quotient of $Q$ where the set $U$ is collapsed to a single point (denoted $A$) and the set $V$ is collapsed to another single point denoted $B$. The space $Q'$ is the unreduced suspension 
$$\Sigma(U\cap V)\, = \, \Sigma(\large\bigsqcup_{(v, e, e')} W_{v,e, e'}).$$
Since $U$ and $V$ are contractible, we see that the quotient map $Q\to Q'$ is an equivariant homotopy equivalence. 

Next we use the fact that each set $W_{v, e, e'}$ is contractible, see above. Hence we obtain that $Q'$ equivariantly deformation retracts onto the suspension $\Sigma(\{(v, e, e')\})$
where $\{(v, e, e')\}$ is viewed as a discrete set of labels. Clearly, the suspension $\Sigma(\{(v, e, e')\})$ coincides with 
what we denoted by $Q_\Gamma$. 
Thus we have constructed a chain of equivariant homotopy equivalences
$F(\Gamma, 2)\simeq Q\simeq Q'\simeq Q_\Gamma$. 
This completes the proof. 
\end{proof}

\begin{ex}\label{ex1} {\rm Consider the graph $\Gamma $ of the letter Y which has a single essential vertex of degree $3$. Applying Theorem 
\ref{thm1} we obtain that the configuration space $F(\Gamma, 2)$ is equivariantly homotopy equivalent to the circle $S^1$ with the standard antipodal involution.}
\end{ex}

The following result is a straightforward corollary of Theorem \ref{thm1}. 

\begin{corollary}\label{thm2}
Let $\Gamma, \Gamma'$ be trees such that $m(\Gamma')>0$ and let $\alpha: \Gamma' \to \Gamma$ be a topological embedding. Then the natural inclusion 
$\alpha': F(\Gamma, 2)\to F(\Gamma, 2)$ induces a monomorphism 
$$\alpha'_\ast: H_1(F(\Gamma', 2))\to H_1(F(\Gamma, 2)).$$
\end{corollary}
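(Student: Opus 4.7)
The plan is to apply Theorem \ref{thm1} to identify both configuration spaces with the 1-dimensional cell complexes $Q_{\Gamma'}$ and $Q_\Gamma$, and then to realize $\alpha'$, up to homotopy, as an inclusion of graphs with a common vertex set. The injectivity on $H_1$ will then follow from an elementary spanning-tree argument.

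First, I would reduce to a combinatorial situation by subdividing $\Gamma$ (a subdivision does not change the homeomorphism type of $\Gamma$, hence not $F(\Gamma,2)$), so that $\alpha$ realizes $\Gamma'$ as an honest combinatorial subgraph of $\Gamma$. Since $m(\Gamma')>0$ the tree $\Gamma'$ has at least two univalent vertices, and I would fix one, call it $u_0$, as the common root for constructing both $Q_{\Gamma'}$ and $Q_\Gamma$ via Theorem \ref{thm1}. Although $u_0$ need not be univalent in $\Gamma$, the proof of Theorem \ref{thm1} needs only a minor modification to allow this: one selects the auxiliary interior point $u_0'$ inside a fixed edge of $\Gamma'$ incident to $u_0$, and the same sets $U$, $V$ and their intersection components $W_{v,e,e'}$ then make sense for both trees. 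With this rooting, every essential vertex $v$ of $\Gamma'$ is essential in $\Gamma$ (its valence only grows under the inclusion), and the descending edge at $v$ is the same in both trees, since the unique simple path from $v$ to $u_0$ lies entirely in the subtree $\Gamma'$. Consequently, every triple $(v,e,e')$ labeling an edge of $Q_{\Gamma'}$ is also a valid label in $Q_\Gamma$, which yields a combinatorial inclusion $Q_{\Gamma'}\hookrightarrow Q_\Gamma$ of 1-complexes sharing the vertex set $\{A,B\}$.

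Next I would verify that, with the shared root and with all auxiliary choices made simultaneously for the two trees, the homotopy equivalences $F(\Gamma',2)\simeq Q_{\Gamma'}$ and $F(\Gamma,2)\simeq Q_\Gamma$ produced by the proof of Theorem \ref{thm1} fit into a homotopy commutative square with $\alpha'$ on one side and the combinatorial inclusion $Q_{\Gamma'}\hookrightarrow Q_\Gamma$ on the other. This is a step-by-step naturality check on the proof of Theorem \ref{thm1}: the open sets $U,V$ defined via paths to $u_0$ restrict from $F(\Gamma,2)$ to their $\Gamma'$-counterparts, the components $W_{v,e,e'}$ indexed by triples in $\Gamma'$ are precisely the intersections with $F(\Gamma',2)$ of the corresponding components in $\Gamma$, the mapping-cylinder construction and the collapse of $U,V$ to points $A,B$ are clearly natural, and the partition of unity and deformation retractions onto the discrete label sets can be chosen compatibly via the standard extension-of-partitions-of-unity. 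I expect this naturality verification to be the main technical obstacle, as it requires being careful with several simultaneous choices.

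Finally, given that $\alpha'$ is represented up to homotopy by the inclusion of graphs $Q_{\Gamma'}\hookrightarrow Q_\Gamma$, injectivity on $H_1$ is immediate: both graphs have only two vertices $A$ and $B$, so any single edge of $Q_{\Gamma'}$ serves as a spanning tree for both $Q_{\Gamma'}$ and $Q_\Gamma$. The remaining edges of $Q_{\Gamma'}$ then furnish part of a basis of $H_1(Q_\Gamma)$, showing that $\alpha'_\ast: H_1(F(\Gamma',2))\to H_1(F(\Gamma,2))$ identifies $H_1(F(\Gamma',2))$ with a direct summand of $H_1(F(\Gamma,2))$, and is in particular injective.
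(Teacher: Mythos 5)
Your proposal is correct and follows essentially the same route as the paper: apply Theorem \ref{thm1} to both trees, exhibit $Q_{\Gamma'}$ as a subcomplex of $Q_\Gamma$ (both having the same two vertices $A,B$), and read off injectivity on $H_1$. The only real difference is in handling the roots: rather than using a common root $u_0$ and modifying the proof of Theorem \ref{thm1} to allow a root that is non-univalent in $\Gamma$, the paper keeps a univalent root $u_0$ of $\Gamma$ and a root $v_0$ of $\Gamma'$, chosen so that the path from $u_0$ to $\alpha(v_0)$ is disjoint from $\alpha(\Gamma'\setminus\{v_0\})$ --- this makes the ascending/descending edge structures agree automatically and sidesteps the modification you invoke.
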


\begin{proof} Let $v_0$ be a root vertex of $\Gamma'$, and let $u_0$ be a root vertex of $\Gamma$ such that the path connecting $u_0$ to $\alpha(v_0)$ is disjoint from  
$\alpha(\Gamma'-\{v_0\})$. 
We see that the complex $Q_{\Gamma'}$ is naturally a subcomplex of $Q_\Gamma$ which implies our statement due to Theorem \ref{thm1}.
\end{proof}

\begin{corollary} 
If $\Gamma'$ is the graph homeomorphic to the letter $Y$ then $H_1(F(\Gamma', 2))=\Z$ and each topological embedding 
$\alpha: \Gamma'\to \Gamma$ determines a generator of the group $\Z\simeq \alpha_\ast(H_1(F(\Gamma', 2))\subset H_1(F(\Gamma, 2))$, unique up to a sign. 
The homology classes corresponding to all such embeddings $\alpha$ generate the group $H_1(F(\Gamma, 2))$ (not freely). 
\end{corollary}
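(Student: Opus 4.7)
The plan is to deduce both assertions from Theorem \ref{thm1} together with Corollary \ref{thm2}.

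For the first part I would specialise Theorem \ref{thm1} to $\Gamma' = Y$. The letter $Y$ has a single essential vertex of degree three, so the formula in Section \ref{secfg2} gives $Q_{\Gamma'}$ as two $0$-cells joined by $(3-1)(3-2) = 2$ parallel $1$-cells, i.e.\ homeomorphic to $S^1$. Hence $H_1(F(\Gamma', 2)) \cong H_1(S^1) \cong \Z$. Given a topological embedding $\alpha\colon \Gamma' \to \Gamma$, I would pick a univalent vertex $v_0$ of $\Gamma'$ and then a univalent vertex $u_0$ of $\Gamma$ lying in the connected component of $\Gamma \smallsetminus \alpha(\Gamma' \smallsetminus \{v_0\})$ that contains $\alpha(v_0)$, so that the roots are compatible in the sense of Corollary \ref{thm2}. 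That corollary then gives injectivity of $\alpha_\ast$, so $\alpha_\ast(H_1(F(\Gamma',2)))$ is an infinite cyclic subgroup of $H_1(F(\Gamma, 2))$; since $\Z$ has precisely the two generators $\pm 1$, the image of either generator of the source generates this subgroup, the two possibilities differing only by a sign.

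For the second (generation) claim I would transport the problem to $Q_\Gamma$ via Theorem \ref{thm1}. Because $Q_\Gamma$ has only two $0$-cells $A,B$, every $1$-cell runs from $A$ to $B$ and $H_1(Q_\Gamma)$ equals the kernel of the augmentation on cellular $1$-chains; as an abelian group it is thus generated by the edge differences $[v,e,e'] - [v'',e'',e''']$. For each essential vertex $v$ of $\Gamma$ and each ordered pair $(e, e')$ of distinct ascending edges at $v$, the Y-subgraph at $v$ with arms along the descending edge at $v$ together with $e$ and $e'$ is an embedding; rooting the $Y$ at the end of its descending arm makes it compatible with the root of $\Gamma$, and Corollary \ref{thm2} identifies the corresponding $Q_{Y}$ with the subcomplex of $Q_\Gamma$ consisting of the two edges $[v, e, e']$ and $[v, e', e]$. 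The induced homology class is therefore $\pm([v, e, e'] - [v, e', e])$, which produces every \lq\lq swap\rq\rq\, edge difference at every essential vertex as a Y-class.

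The main obstacle, and the step that will carry most of the weight of the proof, is realising the edge differences that are not of swap type---most notably those $[v, e, e'] - [v'', e'', e''']$ supported at distinct essential vertices $v \ne v''$---as $\Z$-linear combinations of Y-classes. For these I would exploit Y-subgraphs whose arms extend along longer paths through $\Gamma$, passing through intermediate essential vertices before terminating; computing how the generator of $H_1(F(Y, 2))$ decomposes in $H_1(Q_\Gamma)$ for such extended embeddings produces Y-classes combining contributions at several essential vertices at once, and the relations between all these Y-classes will both supply the missing edge differences and account for the \lq\lq not freely\rq\rq\, clause in the statement.
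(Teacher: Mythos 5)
The paper states this corollary without proof, so there is no official argument to compare against; judged on its own terms, your proposal establishes the first two assertions but not the third. The identification $Q_{\Gamma'}\cong S^1$, hence $H_1(F(\Gamma',2))\cong\Z$, and the appeal to Corollary \ref{thm2} (with $u_0$ chosen in the component of $\Gamma\smallsetminus\alpha(\Gamma'\smallsetminus\{v_0\})$ containing $\alpha(v_0)$, which indeed always contains a univalent vertex of $\Gamma$) to get injectivity of $\alpha'_\ast$, and hence a generator of the infinite cyclic image well-defined up to sign, are correct and are exactly the intended deductions from Theorem \ref{thm1}.

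The generation claim, however, is left by you as an acknowledged ``main obstacle'', and that is precisely where the content of the corollary lies. Your reduction is right: $H_1(Q_\Gamma)\cong\Z^{N-1}$ (where $N=\sum_v(\eta(v)-1)(\eta(v)-2)$) is spanned by edge differences, the small $Y$ at an essential vertex $v$ with arms along the descending edge and two ascending edges $e,e'$ yields the swap class $[v,e,e']-[v,e',e]$, and these swap classes span a subgroup of rank only $N/2$, so the cross-vertex and cross-pair differences must come from $Y$'s with longer or differently placed arms. But for such an embedding the root of $\Gamma'$ compatible with $\alpha$ in the sense of Corollary \ref{thm2} is in general \emph{not} aligned with the global root defining $Q_\Gamma$, so you cannot read off the image class as a subcomplex; you must compute $\alpha_\ast$ directly, e.g.\ via the Mayer--Vietoris identification $H_1(F(\Gamma,2))\cong\widetilde H_0(U\cap V)$, by recording which components $W_{v,e,e'}$ the circulating two-point loop crosses and with what signs. (For instance, extending an arm of the small $Y$ at $v$ further along ascending edges does not change the class at all, whereas an arm that descends past another essential vertex $w$ and then turns off the root path contributes components $W_{w,\cdot,\cdot}$.) Carrying out this computation and verifying that the resulting classes, together with the swap classes, span all of $\Z^{N-1}$ with unit coefficients --- so that one obtains the full group rather than a finite-index subgroup --- is the substantive step, and it is missing from the proposal. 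The ``not freely'' clause, by contrast, is immediate: an embedding and the one obtained by transposing two of its arms give opposite classes, which is already a nontrivial relation.
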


\section{Top-dimensional cohomology of $F(\Gamma, n)$}\label{sec12}

In this section we utilise the results of \S \ref{secfg2} to construct useful cohomology classes of $F(\Gamma, n)$ of the top dimension. The results of this section will be used in the proof of Theorem \ref{tctree}.

Let $\Gamma$ be a tree with $m=m(\Gamma)$ essential vertices. It is known for any $n$ that the configuration space $F(\Gamma, n)$ has the 
homotopy type of a cell complex of dimension $\le m$; in particular $H^i(F(\Gamma, n))=0$ for $i>m$, see \cite{Ghrist01}. In this section we shall consider the $m$-dimensional cohomology classes 
of $F(\Gamma, n)$ assuming that $n\ge 2m$.

\subsection{} We start from the following general remark which will be useful in the sequel. 
\begin{lemma}\label{dom} Let $\Gamma$ be a connected graph having a univalent vertex. 
Then for any $n'>n$ 
the natural projection $$p: F(\Gamma, n') \to F(\Gamma, n),$$ where 
$(x_1, \dots, x_{n'})\mapsto (x_1, \dots, x_n)$,
is a domination, i.e. there exists a continuous map 
$q: F(\Gamma, n) \to F(\Gamma, n')$ such that $p\circ q \sim \id$.
In particular $p$ induces a monomorphism 
$p^\ast: H^\ast(F(\Gamma, n))\to H^\ast(F(\Gamma, n')).$
\end{lemma}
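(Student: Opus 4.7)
The plan is to produce a homotopy right inverse to $p$ by exploiting the free ``room'' near the univalent vertex. Let $u_0$ be the univalent vertex and $e_0$ the edge incident to it, parametrised by arc length as $[0,L]$, with $u_0$ corresponding to $0$ and its neighbour $w$ to $L$. I will construct a ``compression'' self-map $\phi:\Gamma\to\Gamma$ which is the identity off $e_0$ and which, on $e_0$, is the affine map $t\mapsto L/2 + t/2$. Since $\phi$ fixes $w$ and agrees with the identity there, it is continuous on $\Gamma$; it is injective because it is strictly increasing on $e_0$, the image $\phi(e_0)=[L/2,L]$ meets $\Gamma\setminus e_0$ only at $w$, and only $w\in e_0$ maps to $w$. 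The linear homotopy $\phi_s=(1-s)\,\id + s\,\phi$ (interpreted pointwise on $e_0$ and as the identity elsewhere) is also injective at each time $s\in[0,1]$ by the same calculation, so it descends to a homotopy $\phi_s^{\,n}:F(\Gamma,n)\to F(\Gamma,n)$ with $\phi_0^{\,n}=\id$ and $\phi_1^{\,n}=\phi^{\,n}$.

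Next I would fix $n'-n$ pairwise distinct interior points $t_1,\dots,t_{n'-n}$ of the subarc $[0,L/2)\subset e_0$ and define
$$
q(x_1,\dots,x_n)=\bigl(\phi(x_1),\dots,\phi(x_n),t_1,\dots,t_{n'-n}\bigr).
$$
The first $n$ coordinates lie in $\phi(\Gamma)\subset [L/2,L]\cup(\Gamma\setminus e_0)$, hence at distance at least $L/2$ from $u_0$, while the auxiliary points $t_i$ are at distance strictly less than $L/2$, so no collisions can occur between the two groups; collisions within the first group are ruled out by injectivity of $\phi$, and within the second group by the choice of distinct $t_i$. Thus $q$ is a continuous map into $F(\Gamma,n')$.

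By construction $p\circ q=\phi^{\,n}$, and $\phi_s^{\,n}$ provides a homotopy $p\circ q\simeq\id_{F(\Gamma,n)}$. Consequently $q^{\ast}\circ p^{\ast}=(p\circ q)^{\ast}=\id$ on $H^{\ast}(F(\Gamma,n))$, so $p^{\ast}:H^{\ast}(F(\Gamma,n))\to H^{\ast}(F(\Gamma,n'))$ is a split monomorphism, which is exactly the conclusion.

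The only delicate point I expect is verifying that $\phi$ and each $\phi_s$ are genuine self-maps of the graph $\Gamma$ that remain injective; this is really the place where the assumption that $u_0$ is univalent (so that $e_0$ has only one ``attaching'' vertex $w$) is used, and it is a purely local check at $w$. Once that is granted, everything else is a routine concatenation argument.
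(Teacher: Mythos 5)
Your proposal is correct and follows essentially the same route as the paper: the paper pushes $\Gamma$ into a homeomorphic subgraph $\Gamma'=\Gamma-U$ (for $U$ a small neighbourhood of the univalent vertex) via a homotopy of injective self-maps and parks the $n'-n$ extra points in $U$, which is exactly what your explicit affine compression $\phi_s$ of the terminal edge and your choice of the $t_i$ in the freed-up half-edge accomplish. Your version merely makes the paper's asserted homotopy $h_t$ concrete, and the injectivity checks you flag are exactly the ones needed.
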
 
\begin{proof} 
Let $u_0$ be a univalent vertex of $\Gamma$. Let $U\subset \Gamma$ be a small open neighbourhood of $u_0$ in $\Gamma$. 
The graph $\Gamma'=\Gamma -U$ is homeomorphic to $\Gamma$. Define the map 
$s: F(\Gamma',n)\to F(\Gamma, n')$
as follows: fix a set of $n'-n$ pairwise distinct points $a_{n+1}, a_{n+2}, \dots, a_{n'}\in U$ and for any configuration $C=(z_1, \dots, z_n)\in F(\Gamma',n)$ define
$s(C)=(z'_1, \dots, z'_{n'})\in F(\Gamma, n')$ by $z'_i=z_i$ for $i\le n$ and $z'_i=a_i$ for $i>n$. The composition
$$F(\Gamma', n)\stackrel{s}\to F(\Gamma, n')\stackrel p\to F(\Gamma, n)$$
is a homeomorphism induced by the inclusion $\Gamma' \to \Gamma$.
We observe that there is a homotopy of injective maps $h_t: \Gamma\to \Gamma$ with $t\in [0,1]$ such that 
$h_0=1_\Gamma$ and $h_1(\Gamma)=\Gamma'$. Then the composition 
$$F(\Gamma, n) \stackrel{h_1}\to F(\Gamma', n)\stackrel{s}\to F(\Gamma, n')\stackrel p\to F(\Gamma, n)$$
is homotopic to the identity (through the homotopy $h_t: F(\Gamma, n)\to F(\Gamma, n)$). Thus $p\circ q\sim \id$ where $q= s\circ h_1$. 
\end{proof}

\subsection{} From here on, let the symbol $\Gamma$ denote a tree. 
For $n\ge 2m$, let $$\Phi_i: F(\Gamma, n)\to F(\Gamma, 2)\quad\mbox{where}\quad i=1, \dots, m$$ denote the projection
$$\Phi_i(x_1, \dots, x_n) =(x_{2i-1}, x_{2i}).$$

\subsection{} Denote by $v_1, \dots, v_m$ the essential vertices of $\Gamma$. For each $j=1, \dots, m$ fix a topological embedding 
$\Gamma_j\subset \Gamma$ of a letter $Y$ graph into $\Gamma$ around the essential vertex $v_j$. 
Besides, let $\Gamma_0\subset \Gamma$ be a small interval containing the root vertex. 
We assume that the subtrees $\Gamma_0, \dots, \Gamma_m$ are sufficiently small so that 
$\Gamma_i\cap \Gamma_j=\emptyset$ for $i\not=j$. 
We shall consider the space $F(\Gamma_i, 2)$ as being a subspace of $F(\Gamma, 2)$ for each $i$. 

Define the embedding
\begin{eqnarray}
\Psi: \prod_{i=1}^m F(\Gamma_i, 2) \to F(\Gamma, 2m)
\end{eqnarray}
by
$$((x_1, x_2), (x_3, x_4), \dots, (x_{2m-1}, x_{2m})) \mapsto (x_1, x_2, x_3, \dots, x_{2m}).$$
We shall denote by $T^m\subset F(\Gamma, 2m)$ the image of $\Psi$. It is a subset homotopy equivalent to the $m$-dimensional torus, see Example \ref{ex1}. 

We have the commutative diagram
\begin{eqnarray}\label{cd1}
\begin{array}{ccc}
\prod_{j=1}^m F(\Gamma_j, 2) & \stackrel \Psi\to & F(\Gamma, 2m)\\ \\
\downarrow \Pi_i && \downarrow \Phi_i\\ \\
F(\Gamma_i, 2) & \to & F(\Gamma, 2).
\end{array}
\end{eqnarray}
Here $\Pi_i$ is the projection on the $i$-th factor and the lower horizontal map is the inclusion. 

\subsection{}\label{sec24} For any $j=1, \dots, m$ choose a cohomology class
$$\alpha_j\in H^1(F(\Gamma, 2)),$$
which is associated with the vertex $v_j$ via Theorem \ref{thm1}; more specifically, we require that 
\begin{eqnarray}
\begin{array}{l}
\alpha_i| F(\Gamma_i, 2) \not=0\in H^1(F(\Gamma_i, 2))=\Z 
\\ \\
\alpha_j| F(\Gamma_i, 2) =0 \quad \mbox{if}\quad i\not=j.
\end{array}
\end{eqnarray}
Such classes exist due to Theorem \ref{thm1}. 

We obtain $m^2$ cohomology classes 
$$u_{ij}\in H^1(F(\Gamma, 2m)),$$
defined by
$$ u_{ij} = \Phi^\ast_i(\alpha_j), \quad i, j= 1, \dots, m.$$
Using the commutative diagram (\ref{cd1}) we obtain 
\begin{eqnarray}\label{restriction}
\begin{array}{l}
u_{ii}|_{T^m} \not=0\in H^1(T^m) \\ \\
u_{ij}|_{T^m} =0 \quad\mbox{if}\quad i\not=j.
\end{array}
\end{eqnarray}
Moreover, we see that the cup-product 
\begin{eqnarray}\label{prod1}
u_{11}u_{22}\dots u_{mm} =  \prod_{i=1}^m u_{ii} \, \in \, H^{m}(F(\Gamma, 2m))
\end{eqnarray}
is nonzero since $$(\prod_{i=1}^m u_{ii}) \, | _{T^m}\not=0.$$ This follows from our remark above that the class $u_{ii}$ 
is induced from a nonzero class $\alpha_i |F(\Gamma_i, 2)$ under the projection $\Pi_i: T^m\to F(\Gamma_i, 2)$. 

If $z\in H_m(F(\Gamma, 2m))$ denotes the homology class realised by $T^m$ then 
\begin{eqnarray}\label{prod2}
\langle \prod_{i=1}^m u_{ii},\, \  z\rangle \not=0.
\end{eqnarray}

\subsection{}
Next we consider different $m$-fold products of the classes $u_{ij}$. First we observe that 
$ u_{ij}u_{ik}=0$
for any $i, j, k$. 
Indeed, $\alpha_j\alpha_k=0\in H^2(F(\Gamma, 2))$ since $F(\Gamma, 2)$ is homotopy equivalent to a graph; 
hence $u_{ij}u_{ik}=\Phi_i^\ast(\alpha_j\alpha_k)=0.$

%

Let $\sigma=(i_1, i_2, \dots, i_m)$ be a sequence with $i_k\in  \{1, 2, \dots, m\}$; we do not require it to be a permutation, i.e. repetitions of the 
indices are allowed. We associate with $\sigma$ the top-dimensional cohomology class $$u_\sigma= u_{1 i_1}u_{2 i_2}\dots u_{m i_m}\in H^m(F(\Gamma, 2m)).$$
It follows from (\ref{restriction}) that 
\begin{eqnarray}\label{evaluation}
\langle u_\sigma, z\rangle = 0,\end{eqnarray}
assuming that $\sigma$ is distinct from the sequence $(1, 2, \dots, m)$. 

\subsection{} For a permutation $\tau=(j_1, j_2, \dots, j_m)$ of the indices $ 1, 2, \dots, m$ define the homeomorphism 
$$L^\tau : F(\Gamma, 2m) \to F(\Gamma, 2m)$$ by 
$L^\tau (x_1, x_2, \dots, x_{2m}) = (x_{2j_1-1}, x_{2j_1}, \dots, x_{2j_m-1}, x_{2j_m}).$ Define also the homology class
$$z^\tau\, =\, L^\tau_\ast (z) \, \in H_m(F(\Gamma, 2m)).$$

For a sequence $\sigma=(i_1, i_2, \dots, i_m)$ and for a permutation $\tau=(j_1, j_2, \dots, j_m)$, we claim that the evaluation
\begin{eqnarray}\label{prod4}
\langle u_\sigma, \, z^\tau\rangle \, \not= 0
\end{eqnarray}
is nonzero if and only if $\sigma$ and $\tau$ coincide. Indeed, we compute
\begin{eqnarray*}
\langle u_\sigma, z^\tau \rangle  &=& \langle u_\sigma, \, L^\tau_\ast(z)\rangle = \\
&=& \langle (L^\tau)^\ast (u_\sigma), z\rangle  = \langle (L^\tau)^\ast (\prod_{k=1}^m u_{k i_k}), \, z\rangle
\\
&=& \langle  \prod_{k=1}^m (L^\tau)^\ast(u_{k i_k}), \, z\rangle
= \, \langle \prod_{k=1}^m (\Phi_k \circ L^\tau)^\ast(\alpha_{i_k}), \, z\rangle\\
&=& \langle \prod_{k=1}^m \Phi_{j_k}^\ast(\alpha_{i_k}), z\rangle = \langle \prod_{k=1}^m u_{j_k i_k}, z\rangle.
\end{eqnarray*}
Here we used that $\Phi_k\circ L^\tau= \Phi_{j_k}$. 
Using (\ref{restriction}) and (\ref{prod1}) we obtain that the number $\langle u_\sigma, z^\tau \rangle$ is nonzero iff 
$j_k=i_k$ for any $k$, i.e. iff $\sigma$ and $\tau$ are equal. 

\begin{corollary}
The cohomology classes $u_\sigma\in H^m(F(\Gamma, 2m))$ corresponding to various permutations $\sigma$ are linearly independent. In particular, for $n\ge 2m$ the rank of the group $H^m(F(\Gamma, n))$ is at least $m!$. 
\end{corollary}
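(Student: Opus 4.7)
The proof will follow almost immediately from the pairing computation carried out in subsection \ref{sec24} and the surrounding discussion, combined with the domination lemma (Lemma \ref{dom}). The plan is to use the classes $z^\tau\in H_m(F(\Gamma,2m))$ indexed by permutations $\tau$ as a family of test classes that separate the $u_\sigma$, and then push this information up to any $n\ge 2m$ via the forgetful projection.

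First I would show the linear independence statement in the case $n=2m$. Suppose one has a linear relation
\begin{equation*}
\sum_{\sigma}c_\sigma\, u_\sigma = 0 \in H^m(F(\Gamma,2m)),
\end{equation*}
where the sum ranges over permutations $\sigma$ of $\{1,\dots,m\}$ and $c_\sigma$ are scalars. For any fixed permutation $\tau$, evaluate both sides on the homology class $z^\tau\in H_m(F(\Gamma,2m))$. By the formula derived just above the corollary, $\langle u_\sigma,z^\tau\rangle$ is nonzero precisely when $\sigma=\tau$, so the relation collapses to $c_\tau\cdot\langle u_\tau,z^\tau\rangle=0$ and hence $c_\tau=0$. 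Since $\tau$ was arbitrary, all coefficients vanish, proving that the $m!$ classes $\{u_\sigma\}$ are linearly independent in $H^m(F(\Gamma,2m))$, so this group has rank at least $m!$.

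To extend the rank bound to $n\ge 2m$, I would invoke Lemma \ref{dom}. Since $\Gamma$ is a tree, it has univalent vertices, so the hypothesis of the lemma is satisfied. Applied to the projection $p:F(\Gamma,n)\to F(\Gamma,2m)$ forgetting the last $n-2m$ coordinates, it yields an injection
\begin{equation*}
p^\ast : H^m(F(\Gamma,2m))\hookrightarrow H^m(F(\Gamma,n)).
\end{equation*}
Thus the $m!$ classes $p^\ast(u_\sigma)\in H^m(F(\Gamma,n))$ remain linearly independent, giving the desired lower bound on the rank of $H^m(F(\Gamma,n))$.

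There is essentially no obstacle here, since the main content — the nonvanishing of the pairings $\langle u_\sigma,z^\tau\rangle$ — has already been established; the corollary just packages it as a statement about ranks of cohomology groups. The only point requiring a little care is to confirm that Lemma \ref{dom} applies in the form needed (a straightforward verification, since any tree not reduced to a point has univalent vertices) and to make sure that the pushforward classes $p^\ast u_\sigma$ for $\sigma$ running over the $m!$ permutations really are independent, which is immediate from the injectivity of $p^\ast$.
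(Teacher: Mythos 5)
Your proof is correct and follows exactly the argument the paper intends: the corollary is stated without explicit proof precisely because it is the standard dual-pairing consequence of the computation $\langle u_\sigma, z^\tau\rangle \neq 0$ iff $\sigma=\tau$, combined with the injectivity of $p^\ast$ from Lemma \ref{dom}. Both steps are handled properly, including the check that a tree has a univalent vertex so the domination lemma applies.
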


\section{Proof of Theorem \ref{tctree}} \label{sec13}
Below we assume that $\Gamma$ is a tree and $n\ge 2m$. Let us first assume that $m\ge 2$. 

Any degree one cohomology class $u\in H^1(F(\Gamma,n))$ determines a zero-divisor 
$$\bar u = u\otimes 1-1\otimes u \in H^\ast(F(\Gamma, n))\otimes H^\ast(F(\Gamma, n)).$$
Our goal is to find $2m$ cohomology classes of degree one such that the product of the corresponding zero-divisors is nonzero. 

We shall use the notations introduced in the previous section. Consider the classes $u_{11}, u_{22}, \dots, u_{mm}$ and 
$u_{12}, u_{23}, \dots, u_{(m-1) m}, u_{m1}$ and the corresponding zero-divisors $\bar u_{ii}, \bar u_{i(i+1)}$. We want to show that the product 
\begin{eqnarray}\label{prod3}
\prod_{i=1}^m \bar u_{ii}\times \prod_{i+1}^m \bar u_{i(i+1)} \not\, =\, 0\end{eqnarray}
is nonzero. We know that the cohomology of $F(\Gamma, n)$ 
vanishes in degrees $>m$. Therefore we obtain
$$\prod_{i=1}^m \bar u_{ii}\times \prod_{i+1}^m \bar u_{i(i+1)}  = 
\sum_S \pm \left(\prod_{i\in S} u_{ii}\times \prod_{i\not\in S} u_{i(i+1)}\right)\, \otimes\, 
\left(\prod_{i\not\in S}u_{ii}\times \prod_{i\in S}u_{i(i+1)}\right).$$
Here $S$ runs over all subsets $S\subset \{1, 2, \dots, m\}$; the sign $\times$ denotes the cup-product. 
We shall evaluate the product (\ref{prod3}) on the tensor product of two homology classes $z\otimes z^\tau$ where 
$\tau$ is the permutation $(2, 3, \dots, m,1)$ and $z\in H_m(F(\Gamma, n))$ is the homology class defined towards the end of subsection \ref{sec24}. Using statements (\ref{prod2}), (\ref{evaluation}), (\ref{prod4}) we find that all the terms in the sum
\begin{eqnarray*}
&&\langle \prod_i \bar u_{ii}\otimes \prod_i \bar u_{i(i+1)} ,  z\otimes z^\tau\rangle =\\
&&\sum_S \pm \langle \prod_{i\in S}u_{ii}\times \prod_{i\not\in S} u_{i(i+1)}, z\rangle \cdot \langle 
\prod_{i\not\in S}u_{ii}\times \prod_{i\in S} u_{i(i+1)}, z^\tau\rangle
\end{eqnarray*}
vanish except for the term with  
$S=\{1, 2, \dots, m\}$ which is nonzero. This shows that the product (\ref{prod3}) is nonzero. Therefore, 
$${\zcl} (F(\Gamma, n)\ge 2m \quad \mbox{for}\quad n\ge 2m.$$
By Theorem \ref{thm4} we have $\tc(F(\Gamma, n))\ge 2m+1$ and the inverse inequality is given by Corollary \ref{cor2}.

The above arguments fail in the case $m=1$, i.e. when $\Gamma$ is a tree with a single essential vertex $v$, $\eta(v)\ge 3$. In the case $\eta(v)=3$ the tree $\Gamma$ is homeomorphic to the letter $Y$, hence we shall assume that $\eta(v)\ge 4$. Consider the graph $Q_\Gamma$ given by Theorem \ref{thm1}. By Theorem \ref{thm1} the space $F(\Gamma, 2)$ is homotopy equivalent to a wedge of circles where the number of circles equals 
$$b_1(F(\Gamma, 2))= \sum_{v\in V(\Gamma)} \left(\eta(v)-1\right)\left(\eta(v)-2\right) - 1\ge 5.$$
Using Example \ref{exgraph} we have $\tc(F(\Gamma, 2))=3$. This proves our statement for $n=2$. 
If $n>2$ we apply Lemma \ref{dom} and Theorem \ref{domination} to conclude $\tc(F(\Gamma, n))\ge \tc(F(\Gamma, 2))=3$. 
This completes the proof. 

\section{Further comments}\label{sec14}

\subsection{} It is interesting to compare Theorems \ref{thm11} and \ref{tctree}. The topological complexity 
$\tc(F({\Bbb R}^d, n))$ is linear in $n$ but, in contrast, $\tc(F(\Gamma, n))$ equals $2m(\Gamma)+1$, i.e. it is independent of $n$. 
This result may have some practical implications: {\it to simplify the task of controlling a large number of objects moving in space without collisions 
one may restrict their motion to a graph}.

\subsection{} In \cite{FGY} the authors analysed the topological complexity of collision free motion planning of multiple objects in ${\Bbb R}^d$ in the presence of moving obstacles. 

\subsection{} The notion of {\it higher topological complexity} $\tc_s(X)$, where $s= 2, 3, \dots$ was introduced by Rudyak \cite{Rudyak}. The number 
$\tc_s(X)$ can be defined as the Schwarz genus of the fibration 
$$p_s: X^I\to X^s$$
where 
$$p_s(\gamma)= \left(\gamma(0), \gamma\left(\frac{1}{s-1}\right), \dots, \gamma\left(\frac{k}{s-1}\right), \dots, \gamma(1)\right),$$
compare (\ref{fibration}). The invariant $\tc(X)$ which we studied in this paper 
coincides with $\tc_2(X)$. 

The invariant $\tc_s(X)$ is also related to robotics: while in the case of $\tc(X)$ we are dealing with algorithms for a robot to move from an initial state to a final state, in the case of $\tc_s(X)$ with $s>2$ we require that while moving from the initial state to the final state the robot visits $s-2$ additional intermediate states. 
This explains why $\tc_s(X)$ is also called {\it "the sequential topological complexity". }

Note that our notation $\tc_s(X)$ stands for what is called {\it "the unreduced"} topological complexity;  {\it "the reduced"} version is smaller by one. 

\subsection{} The sequential topological complexities of configuration spaces $F({\Bbb R}^d, n)$ were computed in \cite{GG}: 
$$\tc_s(F({\Bbb R}^d, n))= \left\{
\begin{array}{ll}
sn-s+1, & \mbox{for $d$ odd},\\ \\
sn-s, & \mbox{for $d$ even}.
\end{array}
\right.
$$

\subsection{} The topological complexity of a closed orientable surface $\Sigma_g$ of genus $g$ was computed in the initial paper \cite{F1}: 
$$
\tc(\Sigma_g) = 
\left\{
\begin{array}{ll}
3, & \mbox{for $g=0 $ and $g=1$},\\ \\
5, & \mbox{for $g\ge 2 $}.
\end{array}
\right.
$$
The task of finding $\tc(N_g)$ turned out to be much more difficult;
here $N_g$ stands for the closed non-orientable surface of genus $g$. The case $N_1$
(the real projective plane) was settled in \cite{FTY}: $$\tc(N_1)=4.$$ A. Dranishnikov \cite{Dra1} proved that 
$\tc(N_g)=5$ for any $g\ge 5$; he also mentioned that his method can be pushed to prove that $\tc(N_4) =5$ as well. 
While preparing this paper for publication (December 2016) I received information that two independent groups of researchers obtained the full solution to the problem: 
$$\tc(N_g)=5, \quad \mbox{for any $g \ge 2$},$$
see \cite{CV} and \cite{DD}.

%
%
%
%
%


\bibliographystyle{amsalpha}

\end{document}